\DeclareMathOperator{\Hom}{\mathrm{Hom}}
\DeclareMathOperator{\Func}{\mathfrak{F}}
\def\bs#1{\mathbf{#1}}
\def\fs{\Psi}
\def\vv{\tau}
\def\la{\varepsilon}
\def\prs{\Delta_+}
\def\GL{\mathrm{GL}}
\def\abs#1{\lvert#1\rvert}
\long\def\comment#1{}
\def\norm#1{\lVert#1\rVert}
\newtheorem{theorem}{Theorem}[section]
\newtheorem{lemma}[theorem]{Lemma}
\newtheorem{proposition}[theorem]{Proposition}
\theoremstyle{definition}
\newtheorem{example}[theorem]{Example}
\newtheorem{remark}[theorem]{Remark}
\DeclareSymbolFont{cyss}{OT2}{wncyss}{m}{n}
\DeclareMathSymbol{\sh}{\mathbin}{cyss}{`x}
\numberwithin{equation}{section}
\begin{document}

\title{Zeta-functions of weight lattices of compact connected semisimple Lie groups}

\author{By \textit{Yasushi Komori} at Tokyo, \textit{Kohji Matsumoto} at Nagoya \\
and \textit{Hirofumi Tsumura} at Tokyo}

\date{}

\maketitle

\begin{abstract}
  We define zeta-functions of weight lattices 
of 
compact connected semisimple Lie groups. 
  If the group is simply-connected, these zeta-functions coincide with 
  ordinary zeta-functions of root systems of associated Lie algebras.
  In this paper we consider the general connected (but not necessarily
  simply-connected) case,    
  prove the explicit form of Witten's volume formulas for these zeta-functions,
  and further prove functional 
  relations among them which include their volume formulas.
  Also we give new examples of zeta-functions for which parity results hold.
\end{abstract}

\section{The background and the motivation} \label{sec-1}

Let $M$ be a compact 2-dimensional manifold, $G$ a compact connected 
semisimple Lie group acting as a gauge group, and $E$ a $G$-bundle over
$M$.  Motivated by 2-dimensional quantum gauge theories, Witten
\cite{Wi} evaluated the volume of the moduli space $\mathcal{M}$ of
flat connections on $E$ up to gauge transformations.  Such a result
can be regarded as a limit of Verlinde's formula \cite{Ve} when $M$ is
orientable, but Witten developed a more elementary method, based on
the decomposition of $M$ into three-holed spheres.  The result is now
called Witten's volume formula, which expresses the volume of
$\mathcal{M}$ in terms of special values of the Dirichlet series
\begin{equation}
  \label{0-1}
  \zeta_W (s;G)=\sum_{\psi}(\dim\psi)^{-s},
\end{equation}
where $\psi$ runs over all isomorphism classes of finite dimensional 
irreducible representations of $G$.

Let ${\frak g}={\rm Lie}(G)$ be the Lie algebra of $G$, and define
\begin{equation}
  \label{1-1}
  \zeta_W(s;{\frak g})=\sum_{\varphi}(\dim\varphi)^{-s},
\end{equation}
where the summation runs over all isomorphism classes of finite dimensional 
irreducible representations $\varphi$ of ${\frak g}$.
When $G$ is simply-connected, then 
there is a one-to-one correspondence between $\varphi$ and $\psi$.
In fact, each $\varphi$ is the differential of a certain
$\psi$, and so 
\begin{equation}\label{0-2}
  \zeta_W(s;{\frak g})=\zeta_W (s;G).
\end{equation}
Zagier \cite{Za} formulated the series \eqref{1-1} and called them Witten's
zeta-functions (see also Gunnells-Sczech \cite{GS}).   Witten's volume formula especially implies
\begin{equation}
  \label{1-2}
  \zeta_W(2k;{\frak g})=C_{W}(2k,{\frak g})\pi^{2kn}
\end{equation}
for $k\in{\Bbb N}$, where $n$ is the number of all positive roots of
${\frak g}$ and $C_{W}(2k,{\frak g})$ is a rational number.

Before proceeding further, here we fix several notations.  Let ${\Bbb
  N}$ be the set of positive integers, ${\Bbb N}_0={\Bbb N}\cup\{0\}$,
${\Bbb Z}$ the ring of rational integers, ${\Bbb Q}$ the rational
number field, ${\Bbb R}$ the real number field, ${\Bbb C}$ the complex
number field, respectively.

Let $\Delta$ be the set of all roots of ${\frak g}$, $\Delta_+$ the
set of all positive roots of ${\frak g}$ (hence $n=|\Delta_+|$),
$\fs=\{\alpha_1,\ldots,\alpha_r\}$ the fundamental system of $\Delta$,
$\alpha_j^{\vee}$ the coroot of $\alpha_j$ ($1\leq j\leq
r$).  Let $\lambda_1,\ldots,\lambda_r$ be the fundamental weights
satisfying
$\langle\alpha_i^{\vee},\lambda_j\rangle=\lambda_j(\alpha_i^{\vee}) =\delta_{ij}$ 
(Kronecker's delta).  

When $\mathfrak{g}=\mathfrak{sl}(2)$, the corresponding Witten zeta-function is
nothing but the Riemann zeta-function $\zeta(s)$ and \eqref{1-2} implies Euler's
well-known formula for $\zeta(2k)$.   Gunnells and Sczech \cite{GS} introduced a
method of computing $C_{W}(2k,{\frak g})$, and explicitly evaluated
$C_{W}(2k,{\frak{sl}(n)})$ for $n \geq 2$.

In \cite{KM2}, the authors introduced the multi-variable
version of Witten zeta-function
\begin{align}
  \label{1-3}
    \zeta_r({\bf s};{\frak g})=\sum_{m_1=1}^{\infty}\cdots
      \sum_{m_r=1}^{\infty}\prod_{\alpha\in\Delta_+}\langle\alpha^{\vee},
      m_1\lambda_1+\cdots+ m_r\lambda_r\rangle ^{-s_{\alpha}},
\end{align}
where ${\bf s}=(s_{\alpha})_{\alpha\in\Delta_+}\in {\Bbb C}^n$. 
When ${\frak g}$ is of type $X_r$, where $X=A,B,C,D,E,F,$ or $G$, we call 
\eqref{1-3} the
zeta-function of the root system of type $X_r$, and denote it by
$\zeta_r({\bf s};X_r)$.  Putting
\begin{align}
  \label{1-3-3}
   K({\frak g})=\prod_{\alpha\in\Delta_+}\langle\alpha^{\vee},
           \lambda_1+\cdots+\lambda_r\rangle,
\end{align}
and using \cite[(1.5) and (1.7)]{KM2}, we see that 
\begin{align}\label{wittenequalroot}
K({\frak g})^s\zeta_r(s,\ldots,s;{\frak g})=\zeta_W(s;{\frak g}).
\end{align}

In \cite{KM3}, the authors introduced a root-system theoretic
generalization of Bernoulli numbers and periodic Bernoulli functions,
and express $C_W(2k,\mathfrak{g})$ explicitly in terms of generalized 
periodic Bernoulli functions $\mathcal{P}(\mathbf{k},\mathbf{y};\Delta)$. 
Therefore we now have sufficiently explicit information on
formula \eqref{1-2}.
Moreover, in \cite{KMT,KMTJC,KM3,KM4,MT,TsC}, we proved various functional
relations among zeta-functions \eqref{1-1}, which include evaluation formulas
like \eqref{1-2} as special cases.

However the group $G$ is not necessarily simply-connected 
in Witten's paper \cite{Wi}.
(In fact, this point is emphasized by Witten himself in p.182 of \cite{Wi}.)
When $G$ is not simply-connected, relation \eqref{0-2} does not hold.
It is the aim of the present paper to consider such situation; that is, to study 
the zeta-functions and volume formulas in the sense of original formulation of 
Witten.

For this purpose, we introduce the multi-variable version of
$\zeta_W(s;G)$.   From \eqref{1-3} we see that the multi-variable version of
$\zeta_W(s;{\frak g})$ can be regarded as the zeta-function of 
the weight lattice of $\mathfrak{g}$.
   Similarly, in the present paper we will define 
a multi-variable
zeta-function of the weight lattice of $G$.
Actually this zeta-function, defined in Section \ref{sec-3}, is a partial sum 
of $\zeta_r({\bf s};{\frak g})$.
The volume formula for this zeta-function is given as Theorem \ref{thm:W-Z},
which gives an explicit formula for the values of this zeta-function at
${\bf s}=2{\bf k}$, where ${\bf k}=(k_{\alpha})_{\alpha\in\Delta_+}\in{\Bbb N}^n$
satisfying $k_{\alpha}=k_{\beta}$ if $\alpha$ and $\beta$ are of the same length.  
As explicit examples, in
Section \ref{sec-4}, we consider the cases of $A_r$, $B_r$ and $C_r$ 
types ($r\leq 3$), and evaluate the associated zeta-functions in these cases. 

Since Theorem \ref{thm:W-Z} is a formula for ${\bf s}=2{\bf k}$, it is not useful
when we consider the values at odd integer points.   In order to study such cases,
in Sections \ref{sec-5}, we give some functional 
relations among zeta-functions of $A_2$ and $C_2(\simeq B_2)$ types. 
Those relations produce explicit formulas for special values of zeta-functions at
some points of the form 
${\bf s}={\bf l}=(l_{\alpha})_{\alpha\in\Delta_+}$,
where $l_{\alpha}\in{\Bbb N}$ and some of them are odd.
Those results include not only evaluation formulas given in Section \ref{sec-4} but also another type of evaluation formulas which can be regarded as certain extensions of the previous results in \cite{KMTJC,To,TsArch,TsC}. 
In Section \ref{sec-6}, we consider, what is called, \textit{parity results} 
for zeta values of $A_2$ and $C_2$ types.    We prove that parity results hold for
the zeta-functions associated with the groups $PU(3)$ and $PSp(2)$.

\

\section{A general form of zeta-functions}\label{sec-2}

We begin our theory with the definition of rather general form of zeta-functions.
We use the same notation as in \cite{KM5,KM2,KM3} 
(see also \cite{KMT,KMTpja,KMT-Mem,KM4}). 
For the details of basic facts about root systems and Weyl groups, see 
\cite{Bourbaki,Hum72,Hum}. 

Let $V$ be an $r$-dimensional real vector space equipped with an inner product $\langle \cdot,\cdot\rangle$.
The norm $\norm{\cdot}$ is defined by $\norm{v}=\langle v,v\rangle^{1/2}$.
The dual space $V^*$ is identified with $V$ via the inner product of $V$.
Let $\Delta$ be a finite reduced root system which may not be irreducible, and
$\fs=\{\alpha_1,\ldots,\alpha_r\}$ its fundamental system.
We fix 
$\Delta_+$ and $\Delta_-$ as the set of all positive roots and negative roots respectively.
Then we have a decomposition of the root system $\Delta=\Delta_+\coprod\Delta_-$ .
Let $Q=Q(\Delta)$ be the root lattice, $Q^\vee$ the coroot lattice,
$P=P(\Delta)$ the weight lattice, $P^\vee$ the coweight lattice,
$P_+$ the set of integral dominant weights 
and
$P_{++}$ the set of integral strongly dominant weights
respectively defined by
\begin{align}
&  Q=\bigoplus_{i=1}^r\mathbb{Z}\,\alpha_i,\qquad
  Q^\vee=\bigoplus_{i=1}^r\mathbb{Z}\,\alpha^\vee_i,\\
& P=\bigoplus_{i=1}^r\mathbb{Z}\,\lambda_i, \qquad 
  P^\vee=\bigoplus_{i=1}^r\mathbb{Z}\,\lambda^\vee_i,\\
&   P_+=\bigoplus_{i=1}^r\mathbb{N}_0\,\lambda_i, \qquad 
  P_{++}=\bigoplus_{i=1}^r\mathbb{N}\,\lambda_i,
\end{align}
where the fundamental weights $\{\lambda_j\}_{j=1}^r$
and
the fundamental coweights $\{\lambda_j^\vee\}_{j=1}^r$
are the dual bases of $\fs^\vee$ and $\fs$
satisfying $\langle \alpha_i^\vee,\lambda_j\rangle=\delta_{ij}$ and $\langle \lambda_i^\vee,\alpha_j\rangle=\delta_{ij}$ respectively.
A coweight $\mu\in P^\vee$ is said to be minuscule if 
$0\leq\langle\mu,\alpha\rangle\leq 1$ for all $\alpha\in\Delta$.
It is known that a minuscule coweight is one of fundamental coweights
and
as a system of representatives for $P^\vee/Q^\vee$,
we can take
$\{0\}\cup\{\lambda_j^\vee\}_{j\in J}$, where
$J$ is the set of all indices of minuscule coweights.

Let
\begin{equation}
\rho=\frac{1}{2}\sum_{\alpha\in\Delta_+}\alpha=\sum_{j=1}^r\lambda_j
\label{rho-def}
\end{equation}
be the lowest strongly dominant weight.
Then $P_{++}=P_++\rho$.
Let $\sigma_\alpha$ be the reflection with respect to a root $\alpha\in\Delta$ defined as 
\begin{equation}
 \sigma_\alpha:V\to V, \qquad \sigma_\alpha:v\mapsto v-\langle \alpha^\vee,v\rangle\alpha.
\end{equation}
For a subset $A\subset\Delta$, let
$W(A)$ be the group generated by reflections $\sigma_\alpha$ for all $\alpha\in A$. In particular, $W=W(\Delta)$ is the Weyl group, and 
$\{\sigma_j=\sigma_{\alpha_j}\,|\,1\leq j \leq r\}$ generates $W$. 
For $w\in W$, denote 
$\Delta_w=\Delta_+\cap w^{-1}\Delta_-$.

Let $\mathrm{Aut}(\Delta)$ be the subgroup of all the automorphisms $\GL(V)$ which 
stabilizes $\Delta$.
Then the Weyl group $W$ is a normal subgroup of $\mathrm{Aut}(\Delta)$ and
there exists a subgroup $\Omega\subset \mathrm{Aut}(\Delta)$ such that
$\mathrm{Aut}(\Delta)=\Omega\ltimes W$.                                                  
The subgroup $\Omega$ is isomorphic to the group $\mathrm{Aut}(\Gamma)$ of automorphisms
of the Dynkin diagram $\Gamma$
(see \cite[Section\,12.2]{Hum72}).

For a set $X$, 
denote by $\Func(X)$ the set of all complex valued functions on $X$.
For a function $f\in \Func(P)$, we define a subset
\begin{equation}
 H_f=\{\lambda\in P~|~f(\lambda)=0\}
\end{equation}
and for a subset $A$ of $\Func(P)$, define
$H_A=\bigcup_{f\in A}H_f$.
Note that an action of $W$ is induced on $\Func(P)$ 
as $(wf)(\lambda)=f(w^{-1}\lambda)$.

Let $f \in \Func(P/Q)$.
Since $P^\vee/Q^\vee$ is regarded as the dual of $P/Q$ over
$\mathbb{Q}/\mathbb{Z}$, $f$ can be expanded as follows:
\begin{equation}\label{2-expansion}
  f(\lambda)=\sum_{\mu\in P^\vee/Q^\vee}\widehat{f}(\mu)e^{2\pi i\langle\mu,\lambda\rangle},
\end{equation}
where $\langle\;,\;\rangle$ is regarded as an inner product on 
$P^\vee/Q^\vee$, and
$\widehat{f}:P^\vee/Q^\vee\to\mathbb{C}$ is given by
\begin{equation}\label{2-widehat_f_def}
  \widehat{f}(\mu)=\frac{1}{|P/Q|}\sum_{\lambda\in P/Q}f(\lambda)e^{-2\pi i\langle\mu,\lambda\rangle},
\end{equation}
because for $\nu\in P^\vee/Q^\vee$ we have
\begin{equation}
  \sum_{\lambda\in P/Q}e^{2\pi i\langle\nu,\lambda\rangle}=|P/Q|\delta_{\nu,0}.
\label{express-01}
\end{equation}
Note that $f$ is automatically $W$-invariant
because for $\lambda\in P$
we have
$\sigma_\alpha(\lambda)=\lambda-\langle\alpha^\vee,\lambda\rangle\alpha\equiv
\lambda\pmod{Q}$.

For $\bs{s}=(s_{\alpha})\in \mathbb{C}^n$, 
$\bs{y} \in V$ and $f \in \Func(P/Q)$, we define
\begin{equation}
  \zeta_r(\bs{s},\bs{y},f;\Delta)
  =
  \sum_{\lambda\in P_{++}}
  f(\lambda)
  e^{2\pi i\langle\bs{y},\lambda\rangle}\prod_{\alpha\in\Delta_+}
  \frac{1}{\langle\alpha^\vee,\lambda\rangle^{s_\alpha}}.
\label{zeta-gene}
\end{equation}
Note that $\zeta_r(\bs{s},\bs{y},1;\Delta)$ was already studied in our previous 
work (see \cite[Section\,3]{KM5}, \cite[Section\,4]{KM3}). 
When $\Delta=\Delta(X_r)=\Delta({\frak g})$ is the root system 
attached to ${\frak g}$ of type $X_r$, then
$\zeta_r(\bs{s},\bs{0},1;\Delta)$ coincides with $\zeta_r(\bs{s};{\frak g})$
(see \eqref{1-3}).
For $w\in\mathrm{Aut}(\Delta)$, define the action of $w$ on $\zeta_r$ by
\begin{equation}
  (w\zeta_r)(\bs{s},\bs{y},f;\Delta)= 
  \zeta_r(w^{-1}\bs{s},w^{-1}\bs{y},w^{-1}f;\Delta),
\end{equation}
where $w^{-1}(\bs{s})=(s_{w\alpha})_{\alpha\in\Delta_+}$ (if 
$w\alpha\in\Delta_-$, we identify it with $-w\alpha$).   Then it is easy to see that for $w\in\mathrm{Aut}(\Gamma)$,
\begin{equation}
  (w\zeta_r)(\bs{s},\bs{y},f;\Delta)=\zeta_r(\bs{s},\bs{y},f;\Delta).
\end{equation}

\begin{remark} \label{Rem-y}
Why the exponential factor $e^{2\pi i\langle\bs{y},\lambda\rangle}$ is included in
the definition \eqref{zeta-gene}?    This is analogous to the Lerch zeta-function
\begin{equation}\label{Lerch-def}
\phi(s,\alpha)=\sum_{m=1}^{\infty}\frac{e^{2\pi im\alpha}}{m^s}.
\end{equation}
It is clear that the form \eqref{zeta-gene} with an exponential factor is useful
in the study of multiple series with twisting factors, such as 
the series discussed in Example \ref{Exam-A_2} and Remark \ref{Rem-SS}, or
multiple $L$-functions with Dirichlet characters \cite{KM5}.   
Moreover, using $\bs{y}$ we can simplify
our argument in various places.   In the argument below, \eqref{rel-1},
\eqref{gene-Ber} etc. are impossible to show without using $\bs{y}$.  
\end{remark}

\begin{remark} \label{Rem-y2}
It is to be noted that we may regard ${\bf y}\in V/Q^{\vee}$ in \eqref{zeta-gene}.
This is because when $a\in Q^{\vee}$ we have 
$\langle a,\lambda\rangle\in\mathbb{Z}$, hence
$e^{2\pi i\langle\bs{y}+a,\lambda\rangle}=e^{2\pi i\langle\bs{y},\lambda\rangle}$,
for any $\lambda\in P_{++}$.
\end{remark}

\begin{proposition}\label{Prop-conti}
The function $\zeta_r(\bs{s},\bs{y},f;\Delta)$, as a function in $\bs{s}$, can be
continued meromorphically to the whole space $\mathbb{C}^n$. 
\end{proposition}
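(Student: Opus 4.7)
The plan is to reduce the statement to a case already known in the authors' previous work. Since $P/Q$ is a finite group, the Fourier expansion \eqref{2-expansion} writes $f$ as a \emph{finite} linear combination
\[
f(\lambda)=\sum_{\mu\in P^\vee/Q^\vee}\widehat{f}(\mu)\,e^{2\pi i\langle\mu,\lambda\rangle}.
\]
Substituting this into the definition \eqref{zeta-gene} and interchanging the outer finite sum with the (absolutely convergent, in the region of convergence) sum over $\lambda\in P_{++}$, the exponential factors combine and give
\[
\zeta_r(\bs{s},\bs{y},f;\Delta)
=\sum_{\mu\in P^\vee/Q^\vee}\widehat{f}(\mu)\,\zeta_r(\bs{s},\bs{y}+\mu,1;\Delta).
\]
This reduces the problem to the case $f\equiv 1$, for which the meromorphic continuation to the whole space $\mathbb{C}^n$ has already been established in \cite[Section 3]{KM5} and \cite[Section 4]{KM3}. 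As the summation on the right is finite, it follows that $\zeta_r(\bs{s},\bs{y},f;\Delta)$ itself continues meromorphically to $\mathbb{C}^n$.

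For completeness one should verify that the series defining $\zeta_r(\bs{s},\bs{y},f;\Delta)$ converges absolutely in some nonempty open region of $\mathbb{C}^n$, so that the above interchange is legitimate. This is immediate from the crude bound $|f(\lambda)|\leq\max_{P/Q}|f|$ and $|e^{2\pi i\langle\bs{y},\lambda\rangle}|\leq e^{2\pi\|\mathrm{Im}\,\bs{y}\|\cdot\|\lambda\|}$, which shows that the series is dominated by a constant times the $f\equiv 1$ series (with $\mathrm{Re}\,s_\alpha$ sufficiently large and $\bs{y}$ shifted appropriately), and the latter converges absolutely in a suitable tube domain.

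The only genuine step above that is not purely formal is the absolute convergence bound allowing the interchange of summations; this is routine. Were one to carry out the $f\equiv 1$ case from scratch, the natural route is the Mellin--Barnes method: iteratively apply the identity
\[
(A+B)^{-s}=\frac{1}{\Gamma(s)}\frac{1}{2\pi i}\int_{(c)}\Gamma(s+w)\Gamma(-w)\,A^{-s-w}B^{w}\,dw
\]
to each factor $\langle\alpha^\vee,\lambda\rangle^{-s_\alpha}$ corresponding to a non-simple positive root $\alpha$ (where $\langle\alpha^\vee,\lambda\rangle=\sum_j c_{\alpha,j}m_j$ with nonnegative integers $c_{\alpha,j}$), so as to peel off variables one at a time and reduce the whole expression to a finite iterated Mellin--Barnes integral whose integrand is a product of Hurwitz--Lerch zeta functions. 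The poles of the continuation are then read off from the shifts of the contours, and the bookkeeping needed to track these poles is the main technical issue; however, since this has already been carried out in the cited references, the Fourier reduction above suffices here.
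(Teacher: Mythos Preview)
Your approach is essentially identical to the paper's: both use the finite Fourier expansion \eqref{2-expansion} to reduce to the case $f\equiv 1$ via the identity $\zeta_r(\bs{s},\bs{y},f;\Delta)=\sum_{\mu\in P^\vee/Q^\vee}\widehat{f}(\mu)\,\zeta_r(\bs{s},\bs{y}+\mu,1;\Delta)$, and then cite prior work for the meromorphic continuation in that case. The only minor discrepancy is in the references invoked for the $f\equiv 1$ continuation: the paper cites \cite[Section~8]{KM5} for $\bs{y}\in P^\vee/Q^\vee$ and \cite{Komori09} for general $\bs{y}\in V$, whereas you cite \cite[Section~3]{KM5} and \cite[Section~4]{KM3}, which is where the function is introduced rather than where its continuation is established.
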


\begin{proof}
By use of the expression \eqref{2-expansion}, with noting Remark \ref{Rem-y2},
we obtain
\begin{equation}
  \begin{split}
    \zeta_r(\bs{s},\bs{y},f;\Delta)
    &=
    \sum_{\lambda\in P_{++}}
    \sum_{\mu\in P^\vee/Q^\vee}\widehat{f}(\mu)e^{2\pi i\langle\mu,\lambda\rangle}
    e^{2\pi i\langle\bs{y},\lambda\rangle}\prod_{\alpha\in\Delta_+}
    \frac{1}{\langle\alpha^\vee,\lambda\rangle^{s_\alpha}}
    \\
    &=
    \sum_{\mu\in P^\vee/Q^\vee}\widehat{f}(\mu)
    \sum_{\lambda\in P_{++}}
    e^{2\pi i\langle\bs{y}+\mu,\lambda\rangle}\prod_{\alpha\in\Delta_+}
    \frac{1}{\langle\alpha^\vee,\lambda\rangle^{s_\alpha}}
    \\
    &=
    \sum_{\mu\in P^\vee/Q^\vee}\widehat{f}(\mu)
    \zeta_r(\bs{s},\bs{y}+\mu,1;\Delta).
  \end{split}
  \label{rel-1}
\end{equation}
In \cite[section 8]{KM5}, we showed that $\zeta_r(\bs{s},\mu,1;\Delta)$ can be 
continued meromorphically to the whole space, hence, so can be 
$\zeta_r(\bs{s},\bs{0},f;\Delta)$ from \eqref{rel-1}.
More generally the recent result of the first-named author in \cite{Komori09} gives 
that $\zeta_r(\bs{s},\bs{y},1;\Delta)$ $({\bs y}\in V)$ can
be continued meromorphically, so can be $\zeta_r(\bs{s},\bs{y},f;\Delta)$ from 
\eqref{rel-1}.
\end{proof}

Let 
\begin{equation}
  S(\bs{s},\bs{y},f;\Delta)=
  \sum_{\lambda\in P\setminus H_{\Delta^\vee}}f(\lambda)
  e^{2\pi i\langle\bs{y},\lambda\rangle}\prod_{\alpha\in\Delta_+}
    \frac{1}{\langle\alpha^\vee,\lambda\rangle^{s_\alpha}}.
\label{S-def-gen}    
\end{equation}    
Then, 
in the same way as in the case of zeta-functions,
we obtain
\begin{equation}
  S(\bs{s},\bs{y},f;\Delta)
  =
  \sum_{\mu\in P^\vee/Q^\vee}\widehat{f}(\mu)
  S(\bs{s},\bs{y}+\mu,1;\Delta).
\label{S-rel-gen}
\end{equation}

Here we recall the generalized periodic Bernoulli functions 
$\mathcal{P}(\mathbf{k},\mathbf{y};\Delta)$ associated with $\Delta$ as follows 
(for the details, see \cite[Section 4]{KM3}). For 
$\mathbf{k}=(k_\alpha)_{\alpha\in\prs}\in \mathbb{N}_0^{n}$ and 
$\mathbf{y}\in V$ (or $\in V/Q^{\vee}$), 
we define
\begin{equation}
\label{eq:def_P}
  \begin{split}
  \mathcal{P}(\mathbf{k},\mathbf{y};\Delta)
&=
  \int_0^1\dots\int_0^1
  \Bigl(\prod_{\alpha\in\Delta_+\setminus\fs}
  B_{k_\alpha}(x_\alpha)\Bigr)
\\
&\qquad\times
  \biggl(
  \prod_{i=1}^r
  B_{k_{\alpha_i}}
  \Bigl(\Bigl\{
  \langle \mathbf{y},\lambda_i\rangle-\sum_{\alpha\in\Delta_+\setminus\fs}x_\alpha\langle\alpha^\vee,\lambda_i\rangle
  \Bigr\}\Bigr)
  \biggr)
  \prod_{\alpha\in\Delta_+\setminus\fs}
  dx_\alpha,
\end{split}
\end{equation}
where $\{ B_k(x)\}$ are the classical Bernoulli polynomials defined by
$$\frac{te^{xt}}{e^t-1}=\sum_{k=0}^\infty B_k(x)\frac{t^k}{k!}.$$ 
Then we have already obtained 
\begin{equation}
  \label{eq:S_B}
  S(\mathbf{k},\mathbf{y},1;\Delta)=
  (-1)^{n}
  \biggl(\prod_{\alpha\in\Delta_+}
  \frac{(2\pi\sqrt{-1})^{k_\alpha}}{k_\alpha!}\biggr)
  \mathcal{P}(\mathbf{k},\mathbf{y};\Delta)
\end{equation}
for $\mathbf{k}\in (\mathbb{N}_{\geq 2})^{n}$ 
(see \cite[(4.19)]{KM3}). 
This function $\mathcal{P}(\mathbf{k},\mathbf{y};\Delta)$
may be regarded as a generalization of the periodic Bernoulli function 
and $\mathcal{B}_\mathbf{k}(\Delta)=\mathcal{P}(\mathbf{k},0;\Delta)$
the Bernoulli number.

Note that Szenes \cite{Sz98,Sz03} also studied generalizations of
Bernoulli polynomials from the viewpoint of the theory of arrangement
of hyperplanes, which include $\mathcal{P}(\mathbf{k},\mathbf{y};\Delta)$
mentioned above.

Suggested by \eqref{S-rel-gen}, we define generalized Bernoulli functions 
associated with $f$ and $\Delta$ by 
\begin{equation}
  \mathcal{P}(\mathbf{k},\mathbf{y},f;\Delta)=\sum_{\mu\in P^\vee/Q^\vee}\widehat{f}(\mu)
  \mathcal{P}(\mathbf{k},\mathbf{y}+\mu;\Delta). 
\label{gene-Ber}
\end{equation}
Then by \eqref{S-rel-gen}, \eqref{eq:S_B} and \eqref{gene-Ber}, we have
\begin{equation}
\label{eq:SfPf}
  S(\mathbf{k},\mathbf{y},f;\Delta)=
  (-1)^{n}
  \biggl(\prod_{\alpha\in\Delta_+}
  \frac{(2\pi\sqrt{-1})^{k_\alpha}}{k_\alpha!}\biggr)
  \mathcal{P}(\mathbf{k},\mathbf{y},f;\Delta)
\end{equation}
for $\mathbf{k}\in (\mathbb{N}_{\geq 2})^{n}$.

In \cite[Section 9]{KMT-Mem} and \cite[Section 3]{KM3}
we constructed the generating function of $\mathcal{P}(\mathbf{k},{\bf y};\Delta)$, which is 
\begin{equation}
F({\bf t},{\bf y};\Delta)=\sum_{{\bf k}\in \mathbb{N}_0^{n}} \mathcal{P}(\mathbf{k},{\bf y};\Delta)\prod_{\alpha\in \Delta_{+}}\frac{t^{k_\alpha}}{k_{\alpha}!}.
\label{F-gene}
\end{equation}
Since $F({\bf t},{\bf y};\Delta)$ can be evaluated explicitly 
(\cite[Theorem 4.1]{KM5}), we can evaluate $\mathcal{P}(\mathbf{k},\mathbf{y}+\mu;\Delta)$
from the expansion of $F({\bf t},{\bf y};\Delta)$.
In particular we find that $\mathcal{P}(\mathbf{k},\mu;\Delta)\in\mathbb{Q}$
for any $\mu\in P^\vee/Q^\vee$.

\begin{theorem}
\label{thm:S-zeta}
For $\mathbf{s}=\mathbf{k}=(k_\alpha)_{\alpha\in\Delta_+}\in\mathbb{N}_{\geq2}^{n}$, $\bs{y}\in V$ and $f\in\mathfrak{F}(P/Q)$,
\begin{equation}
  \sum_{w\in W}
  \Bigl(\prod_{\alpha\in\Delta_+\cap w\Delta_-}
  (-1)^{k_{\alpha}}\Bigr)
  \zeta_r(w^{-1}\mathbf{k},w^{-1}\mathbf{y},f;\Delta)
  \\
  =(-1)^{n}
  \biggl(\prod_{\alpha\in\Delta_+}
  \frac{(2\pi\sqrt{-1})^{k_\alpha}}{k_\alpha!}\biggr)  \mathcal{P}(\mathbf{k},\mathbf{y},f;\Delta).
\end{equation}
\end{theorem}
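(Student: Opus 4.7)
The plan is to reduce the identity to the explicit evaluation \eqref{eq:SfPf} of $S(\mathbf{k},\mathbf{y},f;\Delta)$ by showing that the Weyl-group sum on the left-hand side is equal to $S(\mathbf{k},\mathbf{y},f;\Delta)$ itself. The underlying observation is that the complement $P\setminus H_{\Delta^\vee}$ appearing in \eqref{S-def-gen} is the set of regular weights, on which $W$ acts freely with fundamental domain $P_{++}$, so
\begin{equation*}
 P\setminus H_{\Delta^\vee}=\coprod_{w\in W}wP_{++}.
\end{equation*}

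First I would substitute this partition into \eqref{S-def-gen} and, for each $w\in W$, change variables to $\nu=w^{-1}\lambda\in P_{++}$. Pulling $w^{-1}$ across the pairings and using that $f\in\mathfrak{F}(P/Q)$ is automatically $W$-invariant (as noted just after \eqref{express-01}), the $w$-th orbit contributes
\begin{equation*}
 \sum_{\nu\in P_{++}}f(\nu)e^{2\pi i\langle w^{-1}\bs{y},\nu\rangle}\prod_{\alpha\in\Delta_+}\frac{1}{\langle (w^{-1}\alpha)^\vee,\nu\rangle^{k_\alpha}}.
\end{equation*}
Next I would do the sign bookkeeping: for $\alpha\in\Delta_+\cap w\Delta_-$ the vector $w^{-1}\alpha$ is a negative root, and replacing $(w^{-1}\alpha)^\vee$ by $(-w^{-1}\alpha)^\vee$ produces a factor $(-1)^{k_\alpha}$, using that $(-\gamma)^\vee=-\gamma^\vee$. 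Since the assignment $\alpha\mapsto\pm w^{-1}\alpha$ with the positive representative chosen is a bijection of $\Delta_+$ onto itself, reindexing the product by $\beta\in\Delta_+$ matches the exponent convention of $w^{-1}\mathbf{k}=(k_{w\beta})_{\beta\in\Delta_+}$, and the $w$-th orbit becomes $\bigl(\prod_{\alpha\in\Delta_+\cap w\Delta_-}(-1)^{k_\alpha}\bigr)\zeta_r(w^{-1}\mathbf{k},w^{-1}\bs{y},f;\Delta)$. Summing over $w$ gives
\begin{equation*}
 S(\mathbf{k},\bs{y},f;\Delta)=\sum_{w\in W}\Bigl(\prod_{\alpha\in\Delta_+\cap w\Delta_-}(-1)^{k_\alpha}\Bigr)\zeta_r(w^{-1}\mathbf{k},w^{-1}\bs{y},f;\Delta),
\end{equation*}
after which \eqref{eq:SfPf} yields the claimed equality.

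I do not foresee any hard analytic step. The assumption $\mathbf{k}\in\mathbb{N}_{\geq2}^{n}$ is precisely what is needed for all Dirichlet-type series to converge absolutely, so the rearrangement by Weyl orbits is legitimate, and the $W$-invariance of $f$ is essential to keep the summand $f(\lambda)$ unchanged when passing from $\lambda$ to $\nu$. The main technical (but quite mild) obstacle is the sign and index bookkeeping for $(w^{-1}\alpha)^\vee$ across positive and negative roots, and the observation that it is exactly encoded in the convention adopted earlier for $w^{-1}\mathbf{k}$. Conceptually, the statement is the promotion of the known $f=1$ orbit identity (implicit in \eqref{eq:S_B} together with \cite[Section 8]{KM5}) to arbitrary $f\in\mathfrak{F}(P/Q)$, made possible by the Fourier expansion \eqref{2-expansion} and the definition \eqref{gene-Ber}.
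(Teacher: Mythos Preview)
Your argument is correct and is essentially the same as the paper's: both decompose $P\setminus H_{\Delta^\vee}$ as the disjoint union of Weyl translates of $P_{++}$, change variables $\lambda=w\nu$, perform the identical sign bookkeeping over $\Delta_+\cap w\Delta_-$, invoke the $W$-invariance of $f$, and then apply \eqref{eq:SfPf}. The only cosmetic difference is that the paper carries $w^{-1}f$ through and invokes its $W$-invariance at the very end, whereas you invoke it immediately.
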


\begin{proof}
Since $P\setminus H_{\Delta^\vee}=\bigcup_{w\in W}w(P_{++})$, we have
\begin{equation}
  \begin{split}
    S(\bs{s},\bs{y},f;\Delta)
    &=
    \sum_{\lambda\in P\setminus H_{\Delta^\vee}}f(\lambda)
    e^{2\pi i\langle\bs{y},\lambda\rangle}\prod_{\alpha\in\Delta_+}
    \frac{1}{\langle\alpha^\vee,\lambda\rangle^{s_\alpha}}
    \\
    &=
    \sum_{w\in W}
    \sum_{\lambda\in P_{++}}f(w\lambda)
    e^{2\pi i\langle\bs{y},w\lambda\rangle}\prod_{\alpha\in\Delta_+}
    \frac{1}{\langle\alpha^\vee,w\lambda\rangle^{s_\alpha}}
    \\
    &=
    \sum_{w\in W}
    \sum_{\lambda\in P_{++}}(w^{-1}f)(\lambda)
    e^{2\pi i\langle w^{-1}\bs{y},\lambda\rangle}\prod_{\alpha\in\Delta_+}
    \frac{1}{\langle w^{-1}\alpha^\vee,\lambda\rangle^{s_\alpha}}
    \\
    &=
    \sum_{w\in W}
    \Bigl(\prod_{\alpha\in\Delta_{w^{-1}}}(-1)^{-s_\alpha}\Bigr)
    \zeta_r(w^{-1}\bs{s},w^{-1}\bs{y},w^{-1}f;\Delta),
  \end{split}
\end{equation}
where the last equality follows by rewriting $\alpha$ to $w\alpha$, and when
$\alpha\in -\Delta_w=w^{-1}\Delta_+\cap\Delta_-$ further replacing $\alpha$ by
$-\alpha$ (see the proof of Theorem 4.3 in \cite{KM3}).

Combining \eqref{eq:SfPf}
and the $W$-invariance of $f$,
 we have the result.
\end{proof}

In the following sections, we treat some special cases.
Let $\mathcal{A}\subset P$ with $\mathcal{A}+Q=\mathcal{A}$.
Let
$\iota_\mathcal{A}:P\to \{0,1\}$ be the characteristic function of $\mathcal{A}$ defined by
\begin{equation}
  \iota_\mathcal{A}(\lambda)=
  \begin{cases}
    1\qquad &(\lambda\in \mathcal{A}),\\
    0\qquad &(\lambda\not\in \mathcal{A}).
  \end{cases}
\end{equation}
Then $\iota_\mathcal{A}$ can be regarded as a function on $P/Q$.
Hence \eqref{2-expansion} and \eqref{2-widehat_f_def} implies that
\begin{equation}
  \iota_\mathcal{A}(\lambda)=\sum_{\mu\in P^\vee/Q^\vee}\widehat{\iota_\mathcal{A}}(\mu)e^{2\pi i\langle\mu,\lambda\rangle},
\label{Chi-1}
\end{equation}
where $\widehat{\iota_\mathcal{A}}:P^\vee/Q^\vee\to\mathbb{C}$ is given by
\begin{equation}
  \widehat{\iota_\mathcal{A}}(\mu)=\frac{1}{|P/Q|}\sum_{\lambda\in P/Q}
\iota_\mathcal{A}(\lambda)e^{-2\pi i\langle\mu,\lambda\rangle}
=\frac{1}{|P/Q|}\sum_{\lambda\in \mathcal{A}/Q}e^{-2\pi i\langle\mu,\lambda\rangle}.
\label{Chi-2}
\end{equation}

\section{Zeta-functions of weight lattices of Lie groups} \label{sec-3}

Now we define zeta-functions of weight lattices of Lie groups. 
Let $\widetilde{G}$ be a simply-connected compact semisimple Lie group,
and ${\frak g}={\rm Lie}(\widetilde{G})$.
There is a one-to-one correspondence between 
a connected compact semisimple Lie group $G$ whose universal covering group is
$\widetilde{G}$, and a lattice $L$ with 
$Q(\Delta({\frak g}))\subset L\subset P(\Delta({\frak g}))$ up to 
automorphisms (see Remark \ref{rem-one-to-one})
by taking  $L=L(G)$ as the weight lattice of $G$.
Let $L_+=P_+\cap L$. 

We define the zeta-function of the weight lattice $L=L(G)$ of the semisimple Lie group $G$ by 
\begin{equation}
  \zeta_r(\bs{s},\bs{y};G)=
  \zeta_r(\bs{s},\bs{y};L;\Delta):=\sum_{\lambda\in L_++\rho}
    e^{2\pi i\langle\bs{y},\lambda\rangle}\prod_{\alpha\in\Delta_+}
    \frac{1}{\langle\alpha^\vee,\lambda\rangle^{s_\alpha}}. 
\label{Lerch-zeta-L}
\end{equation}
This is the case $f=\iota_{\mathcal{A}}$, $\mathcal{A}=L+\rho$ 
and $\Delta$ of \eqref{zeta-gene},
and so, 
by Proposition \ref{Prop-conti}, we see that this zeta-function can be continued 
meromorphically to $\mathbb{C}^{n}$.
When ${\bf y}={\bf 0}$, we sometimes write this zeta-function as $\zeta_r({\bf s};G)$ or
$\zeta_r(\bs{s};L;\Delta)$ for brevity.
It is to be noted that if $G=\widetilde{G}$, then $L=P$ and 
$\zeta_r({\bf s};\widetilde{G})$ coincides with $\zeta_r({\bf s};{\frak g})$
defined in Section \ref{sec-1}.


For any lattice $M$, we define $M^*=\Hom(M,\mathbb{Z})$.
Since $Q^*=P^\vee$ and $P^*=Q^\vee$,
from $Q\subset L\subset P$ we obtain
\begin{align}\label{totemoatui}
 P^{\vee}=Q^*\supset L^*\supset P^*=Q^{\vee}.
\end{align}
We define
\begin{equation}
  \delta_{L^*/Q^\vee}(\mu)=
  \Biggl\{
  \begin{aligned}
    1\qquad&(\mu\in L^*/Q^\vee),\\
    0\qquad&(\mu\notin L^*/Q^\vee).
  \end{aligned}
\end{equation}

\begin{proposition} \label{P-3-4}
Let $L$ be a lattice satisfying $Q\subset L\subset P$.
For $\mu\in P^\vee/Q^\vee$, we have
\begin{equation}
  \widehat{\iota_{L+\rho}}(\mu)=
  \frac{(-1)^{\langle\mu,2\rho\rangle}}{|P/L|}\delta_{L^*/Q^\vee}(\mu)
  \in\mathbb{Q}.
\label{chi-wide-L}
\end{equation}  
\end{proposition}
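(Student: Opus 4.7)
The plan is to compute $\widehat{\iota_{L+\rho}}(\mu)$ directly from the definition \eqref{Chi-2} and recognize the resulting expression as a character sum on the finite abelian group $L/Q$. Starting from
\begin{equation*}
  \widehat{\iota_{L+\rho}}(\mu)=\frac{1}{|P/Q|}\sum_{\lambda\in(L+\rho)/Q}e^{-2\pi i\langle\mu,\lambda\rangle},
\end{equation*}
I would substitute $\lambda=\nu+\rho$ with $\nu$ running over $L/Q$ (which makes sense because $Q\subset L$, and the summand is invariant under $\nu\mapsto\nu+Q$ since $\mu\in P^\vee$ pairs integrally with $Q$), factoring out the constant $e^{-2\pi i\langle\mu,\rho\rangle}$ to leave
\begin{equation*}
  \widehat{\iota_{L+\rho}}(\mu)=\frac{e^{-2\pi i\langle\mu,\rho\rangle}}{|P/Q|}\sum_{\nu\in L/Q}e^{-2\pi i\langle\mu,\nu\rangle}.
\end{equation*}

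Next I would exploit orthogonality. The map $\nu\mapsto e^{-2\pi i\langle\mu,\nu\rangle}$ is a well-defined character on $L/Q$, and it is trivial exactly when $\langle\mu,L\rangle\subset\mathbb{Z}$, i.e. when $\mu\in L^{*}$. Since we already have $Q^\vee\subset L^*\subset P^\vee$ by \eqref{totemoatui}, this triviality condition, for $\mu\in P^\vee/Q^\vee$, becomes $\mu\in L^*/Q^\vee$. Hence the inner sum equals $|L/Q|\cdot\delta_{L^*/Q^\vee}(\mu)$, and combining with the identity $|P/Q|=|P/L|\cdot|L/Q|$ yields
\begin{equation*}
  \widehat{\iota_{L+\rho}}(\mu)=\frac{e^{-2\pi i\langle\mu,\rho\rangle}}{|P/L|}\delta_{L^*/Q^\vee}(\mu).
\end{equation*}

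It then remains to evaluate the phase factor. Using $2\rho=\sum_{\alpha\in\Delta_+}\alpha\in Q$ together with $\mu\in P^\vee$, one has $\langle\mu,2\rho\rangle\in\mathbb{Z}$, so $e^{-2\pi i\langle\mu,\rho\rangle}=e^{-\pi i\langle\mu,2\rho\rangle}=(-1)^{\langle\mu,2\rho\rangle}$. To make sense of this sign on the quotient $P^\vee/Q^\vee$, I would check that $\langle\mu,2\rho\rangle\bmod 2$ depends only on $\mu\bmod Q^\vee$; this reduces to verifying that $\langle\alpha_i^\vee,2\rho\rangle=2\langle\alpha_i^\vee,\sum_j\lambda_j\rangle=2$ is even for every simple coroot $\alpha_i^\vee$, which is immediate from the defining duality $\langle\alpha_i^\vee,\lambda_j\rangle=\delta_{ij}$.

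Putting these pieces together gives the stated formula, and rationality is automatic since $(-1)^{\langle\mu,2\rho\rangle}/|P/L|\in\mathbb{Q}$. I do not anticipate any real obstacle; the only subtle points are confirming that the character on $L/Q$ is well-defined and that the $(-1)^{\langle\mu,2\rho\rangle}$ sign descends to $P^\vee/Q^\vee$, both of which are quick consequences of the integrality properties of root and weight lattices.
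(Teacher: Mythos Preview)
Your proof is correct and follows essentially the same approach as the paper's: shift by $\rho$ to extract the phase $e^{-2\pi i\langle\mu,\rho\rangle}=(-1)^{\langle\mu,2\rho\rangle}$, then evaluate the remaining sum over $L/Q$ by character orthogonality and simplify $|L/Q|/|P/Q|=1/|P/L|$. Your additional verification that the sign $(-1)^{\langle\mu,2\rho\rangle}$ descends to $P^\vee/Q^\vee$ is a nice extra detail the paper leaves implicit.
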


\begin{proof}
We have
\begin{equation*}
  \sum_{\lambda\in (L+\rho)/Q}e^{-2\pi i\langle\mu,\lambda\rangle}
  =
  (-1)^{\langle\mu,2\rho\rangle}
  \sum_{\lambda\in L/Q}e^{-2\pi i\langle\mu,\lambda\rangle}.
\end{equation*}
Note that $(-1)^{\langle\mu,2\rho\rangle}\in\{1,-1\}$
due to $\rho\in Q/2$.
We obtain
\begin{equation*}
  \sum_{\lambda\in L/Q}e^{-2\pi i\langle\mu,\lambda\rangle}=
  \begin{cases}
    |L/Q| \qquad &(\mu\in L^*/Q^\vee),\\
    0\qquad &(\mu\notin L^*/Q^\vee).
  \end{cases}
\end{equation*}
Therefore 
\eqref{Chi-2}
gives
\begin{equation*}
  \widehat{\iota_{L+\rho}}(\mu)=(-1)^{\langle\mu,2\rho\rangle}
\frac{|L/Q|}{|P/Q|}\delta_{L^*/Q^\vee}(\mu).
\end{equation*}
\end{proof}

In particular
\begin{align}
  \widehat{\iota_{P+\rho}}(\mu)
  &=\frac{1}{|P/Q|}\sum_{\lambda\in (P+\rho)/Q}e^{-2\pi i\langle\mu,\lambda\rangle}
  =\delta_{\mu,0}
\end{align}
and
\begin{align}
  \widehat{\iota_{Q+\rho}}(\mu)
  &=\frac{1}{|P/Q|}\sum_{\lambda\in (Q+\rho)/Q}e^{-2\pi i\langle\mu,\lambda\rangle}
  =\frac{(-1)^{\langle\mu,2\rho\rangle}}{|P/Q|}.
\end{align}

We define
\begin{equation}
  \mathcal{P}(\mathbf{k},\mathbf{y};L;\Delta)
  =
  \mathcal{P}(\mathbf{k},\mathbf{y},\iota_{L+\rho};\Delta).
\end{equation}
Note that
since $P/L\simeq L^*/Q^{\vee}\simeq\pi_1(G)$, this can also be written as
\begin{equation}
  \mathcal{P}(\mathbf{k},\mathbf{y};L;\Delta)
  =
  \frac{1}{|\pi_1(G)|}
  \sum_{\mu\in \pi_1(G)}    
  (-1)^{\langle\mu,2\rho\rangle}
  \mathcal{P}(\mathbf{k},\mathbf{y}+\mu;\Delta)
\end{equation}
by            
\eqref{gene-Ber} and Proposition \ref{P-3-4}. 

We can compute $\mathcal{P}(\mathbf{k},\mathbf{y};L;\Delta)$ explicitly by            
\eqref{gene-Ber}. 
In particular, combining with \eqref{chi-wide-L} we have for $\nu\in P^\vee/Q^\vee$,
\begin{equation}
  \mathcal{P}(\mathbf{k},\nu;L;\Delta) \in \mathbb{Q}.
  \label{Ber-Q}
\end{equation}
From this fact we can deduce the following.

\begin{theorem}\label{thm:W-Z} 
  For a compact connected semisimple Lie group $G$, let
  $\Delta=\Delta(G)$ be its root system, 
  and $L=L(G)$ be its weight lattice.
  Let
  $\mathbf{k}=(k_\alpha)_{\alpha\in\prs}\in\mathbb{N}^{n}$ {\rm
    ($n=|\Delta_+|$)} satisfying $k_\alpha=k_\beta$ whenever
  $\norm{\alpha}=\norm{\beta}$.  Let
  $\kappa=\sum_{\alpha\in\Delta_+}{2k_\alpha}$.  Then we have
for $\nu\in P^\vee/Q^\vee$,
\begin{equation}
  \label{eq:vol_formula}
  \begin{split}
    \zeta_r& (2\mathbf{k},\nu;G) =\zeta_r(2\mathbf{k},\nu;L;\Delta)\\
    & =\frac{(-1)^{n}}{\abs{W}}
    \biggl(\prod_{\alpha\in\Delta_+}
    \frac{(2\pi\sqrt{-1})^{2k_\alpha}}{(2k_\alpha)!}\biggr)
    \mathcal{P}(2\mathbf{k},\nu;L;\Delta) \in \mathbb{Q}\cdot 
    \pi^{\kappa}.
  \end{split}
\end{equation}
\end{theorem}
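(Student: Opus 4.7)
The plan is to derive this formula as a direct consequence of Theorem \ref{thm:S-zeta} applied with $\bs{s}=2\bs{k}$, $\bs{y}=\nu$, and $f=\iota_{L+\rho}$. Two features of this choice make the general identity collapse. First, every exponent $2k_\alpha$ is even, so for every $w\in W$ the sign factor $\prod_{\alpha\in\Delta_+\cap w\Delta_-}(-1)^{2k_\alpha}=1$. Second, because $W$ preserves root lengths and the hypothesis on $\bs{k}$ forces $k_\alpha=k_\beta$ whenever $\norm{\alpha}=\norm{\beta}$, we obtain $k_{w\alpha}=k_\alpha$ for all $w\in W$, hence $w^{-1}(2\bs{k})=2\bs{k}$. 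Consequently Theorem \ref{thm:S-zeta} collapses to
\begin{equation*}
  \sum_{w\in W}\zeta_r(2\bs{k},w^{-1}\nu,\iota_{L+\rho};\Delta)
  =(-1)^{n}\biggl(\prod_{\alpha\in\Delta_+}\frac{(2\pi\sqrt{-1})^{2k_\alpha}}{(2k_\alpha)!}\biggr)\mathcal{P}(2\bs{k},\nu;L;\Delta).
\end{equation*}

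The remaining step, which I expect to be the crux, is to show that all $|W|$ summands on the left-hand side coincide with $\zeta_r(2\bs{k},\nu;L;\Delta)$. For this I would establish that $W$ acts trivially on the quotient $P^\vee/Q^\vee$. Using the equivalent form of the reflection $\sigma_\alpha(v)=v-\langle v,\alpha\rangle\alpha^\vee$ (which agrees with the definition $\sigma_\alpha(v)=v-\langle\alpha^\vee,v\rangle\alpha$ via $\alpha^\vee=2\alpha/\langle\alpha,\alpha\rangle$), one sees that for any $\mu\in P^\vee$
\begin{equation*}
  \sigma_\alpha(\mu)=\mu-\langle\mu,\alpha\rangle\alpha^\vee\equiv\mu\pmod{Q^\vee},
\end{equation*}
since $\langle\mu,\alpha\rangle\in\mathbb{Z}$ (because $P^\vee=Q^*$ pairs integrally with $\alpha\in Q$) and $\alpha^\vee\in Q^\vee$. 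Iterating over simple reflections yields $w^{-1}\nu\equiv\nu\pmod{Q^\vee}$ for every $w\in W$. By Remark \ref{Rem-y2}, $\zeta_r(\bs{s},\bs{y},f;\Delta)$ depends on $\bs{y}$ only modulo $Q^\vee$, so each summand on the left equals $\zeta_r(2\bs{k},\nu;L;\Delta)$. Dividing through by $|W|$ gives the claimed identity.

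Finally, the membership $\zeta_r(2\bs{k},\nu;G)\in\mathbb{Q}\cdot\pi^{\kappa}$ is immediate from the identity just derived: $\mathcal{P}(2\bs{k},\nu;L;\Delta)\in\mathbb{Q}$ by \eqref{Ber-Q}, and the prefactor $\prod_{\alpha\in\Delta_+}(2\pi\sqrt{-1})^{2k_\alpha}/(2k_\alpha)!=(2\pi\sqrt{-1})^{\kappa}/\prod_{\alpha}(2k_\alpha)!$ is a rational multiple of $\pi^{\kappa}$, since $\kappa$ is even so that $(\sqrt{-1})^{\kappa}=\pm 1$. The only conceptual point that requires attention is the triviality of the $W$-action on $P^\vee/Q^\vee$; once that is in hand, the proof amounts to a substitution into Theorem \ref{thm:S-zeta}.
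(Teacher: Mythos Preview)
Your proposal is correct and follows essentially the same route as the paper: apply Theorem \ref{thm:S-zeta} at $\bs{s}=2\bs{k}$, $\bs{y}=\nu$, $f=\iota_{L+\rho}$, then use evenness of the exponents, the $W$-invariance of $2\bs{k}$ (from the length condition), and $w^{-1}\nu\equiv\nu\pmod{Q^\vee}$ to collapse the Weyl sum to $|W|$ copies of a single term, concluding with \eqref{Ber-Q}. Your argument is in fact slightly more detailed than the paper's, which simply asserts $w^{-1}\nu\equiv\nu\pmod{Q^\vee}$ without the explicit coroot computation you supply.
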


\begin{proof}
By Theorem \ref{thm:S-zeta} with
 ${\bf s}=2{\bf k}$ and ${\bf y}=\nu$,
we obtain
\begin{equation*}
\sum_{w\in W}\zeta_r(w^{-1}(2{\bf k}),w^{-1}\nu;L;\Delta)
=
(-1)^n 
  \biggl(\prod_{\alpha\in\Delta_+}
  \frac{(2\pi\sqrt{-1})^{k_\alpha}}{k_\alpha!}\biggr)\mathcal{P}(\bs{k},\bs{y};L;\Delta).
\end{equation*}
Since roots of the same length form a single Weyl-orbit
and $w^{-1}\nu\equiv \nu\pmod{Q^\vee}$, the left-hand side
of the above is
$$
\sum_{w\in W}\zeta_r(2{\bf k},\nu;L;\Delta)
=|W|\zeta_r(2{\bf k},\nu;L;\Delta).
$$
The assertion follows from this 
and \eqref{Ber-Q}.
\end{proof}

This theorem is the explicit form of the volume formula for the zeta-function
of the lattice $L=L(G)$.
In the case when $L=P$, \eqref{eq:vol_formula} coincides with our previous result 
in \cite[Theorem 4.6]{KM3}. 

\begin{remark}\label{rem-one-to-one}
The correspondence between Lie groups and lattices is a well-known fact, but here
we sketch the demonstration for the convenience of readers.   
Let $G$ be a compact connected semisimple Lie group, whose universal covering
group is $\widetilde{G}$.   There is a one-to-one correspondence between 
isomorphism classes of finite dimensional irreducible representations of $G$ and
dominant analytically integral forms (for example, \cite[Theorem 5.110]{Kn}).
The set of dominant analytically integral forms produces a sublattice $L=L(G)$
of the weight lattice (or the lattice of algebraically integral forms) $P$ of $\mathfrak{g}$, and
$L$ includes the root lattice $Q$ (\cite[(4.63)]{Kn}).   
In particular $L(\widetilde{G})=P$.
Conversely, let $\widetilde{G}$ be a simply-connected Lie group,
${\frak g}={\rm Lie}(\widetilde{G})$, and let $L$ be a lattice satisfying
$Q=Q(\Delta({\frak g}))\subset L\subset P=P(\Delta({\frak g}))$.   
Then \eqref{totemoatui} holds.
Since $P^{\vee}/Q^{\vee}$ is isomorphic to the center $\widetilde{Z}$ of 
$\widetilde{G}$ by the mapping
$$
\Phi\;:\;P^{\vee}\ni\mu\mapsto\exp_{\widetilde{G}}(2\pi i\mu)\in\widetilde{Z}
$$
(where $\exp_{\widetilde{G}}$ means the exponential mapping associated with
$\widetilde{G}$), 
we may regard $L^*/Q^{\vee}$ as a subgroup of $\widetilde{G}$.
Define $G=\widetilde{G}/(L^*/Q^{\vee})$.   
We show that the lattice corresponding to $G$ is $L$.   Write $L_1=L(G)$.
Take a maximal torus $K$ of $G$, and ${\frak k}={\rm Lie}(G)$.
Let $\lambda\in L$ and $H\in{\frak k}$ with $\exp_{G}(H)=1$.   The last
condition implies $\exp_{\widetilde{G}}(H)\in\Phi(L^*)$, so
$H\in 2\pi i L^*$, $\lambda(H)\in 2\pi i\mathbb{Z}$.
Therefore $\lambda\in L_1$ by \cite[Proposition 4.58]{Kn},
hence $L\subset L_1$.   
On the other hand $(L_1^*:Q^{\vee})=(L_1^*:P^*)=(P:L_1)$, but the right-hand side
is equal to $(L^*:Q^{\vee})$ by \cite[Proposition 4.67]{Kn}.   Therefore
$(L_1^*:Q^{\vee})=(L^*:Q^{\vee})$, and we can
conclude that $L_1=L$.
\end{remark}

\ 

\section{Explicit forms of zeta-functions} \label{sec-4}

In this section, we will give several examples of explicit forms of zeta-functions 
defined by \eqref{Lerch-zeta-L}.
When $L=P$, the zeta-function is nothing but $\zeta_r({\bf s};{\frak g})$,
so our main concern is the case $P\supsetneq L\supset Q$.

\begin{example} \label{Exam-A_2} 
We first study the case of $A_2$ type. 
Let $\Delta=\Delta(A_2)$ with $\Psi=\{ \alpha_1,\,\alpha_2\}$, 
$\Delta_{+}=\{ \alpha_1,\,\alpha_2,\,\alpha_1+\alpha_2\}$, 
$P=\mathbb{Z}\lambda_1+\mathbb{Z}\lambda_2$, 
$Q=\mathbb{Z}\alpha_1+\mathbb{Z}\alpha_2$, and $\rho=\lambda_1+\lambda_2$.
It is known that $(P:Q)=3$ (see \cite[Planche]{Bourbaki}). 
Therefore the only lattice $L$ with $P\supsetneq L \supset Q$ is $Q$.  
Then $Q_{+}=P_{+}\cap Q$.
We show
\begin{equation}
Q_{+}+\rho=\left\{ m_1\lambda_1+m_2\lambda_2\,|\,m_1,m_2 \in \mathbb{N},
\ m_1 \equiv m_2\ (\text{mod}\ 3)\right\}.
 \label{Q2}
\end{equation}
To show this, first note that 
\begin{align}\label{lambda-alpha}
\lambda_1=\frac{2}{3}\alpha_1+\frac{1}{3}\alpha_2,\quad
\lambda_2=\frac{1}{3}\alpha_1+\frac{2}{3}\alpha_2.
\end{align}
In fact, write 
$\lambda_1=u\alpha_1+v\alpha_2$.   Since
$\alpha_i^\vee=2\alpha_i/\langle \alpha_i,\alpha_i \rangle$ and 
$\langle \alpha_i^\vee,\lambda_j\rangle=\delta_{ij}$, 
we have
$$
1=\langle\alpha_1^{\vee},\lambda_1\rangle
=\frac{2}{\langle \alpha_1,\alpha_1 \rangle}(u\langle \alpha_1,\alpha_1 \rangle
+v\langle \alpha_1,\alpha_2 \rangle)=2u-v,
$$
and $0=\langle\alpha_2^{\vee},\lambda_1\rangle=-u+2v$, from which we obtain
$u=2/3$, $v=1/3$, so $\lambda_1=(2/3)\alpha_1+(1/3)\alpha_2$.   The case of
$\lambda_2$ is similar.

Let $\lambda=m_1\lambda_1+m_2\lambda_2\in Q_{+}+\rho$ ($m_1,m_2\in\mathbb{N}$).
Then $\lambda-\rho=n_1\lambda_1+n_2\lambda_2\in Q_{+}$, where $n_j=m_j-1$ ($j=1,2$).
From \eqref{lambda-alpha} we have
$$
n_1\lambda_1+n_2\lambda_2=\frac{2n_1+n_2}{3}\alpha_1+\frac{n_1+2n_2}{3}\alpha_2.
$$
Since this belongs to $Q$, we have $(2n_1+n_2)/3\in\mathbb{Z}$ and
$(n_1+2n_2)/3\in\mathbb{Z}$, which are equivalent to 
$n_1 \equiv n_2\ (\text{mod}\ 3)$.                                                      
This implies \eqref{Q2}.

The simply-connected group $\widetilde{G}$ in the $A_2$ case is $SU(3)$. 
Let $\widetilde{Z}$ be the center of $\widetilde{G}$.
The group corresponding to $Q$ is
 $\widetilde{G}/\widetilde{Z}$, 
which is the 
projective unitary group $PU(3)$.
The zeta-function corresponding to $P$ is 
\begin{align}\label{MT}
\zeta_2((s_1,s_2,s_3),\bs{y};SU(3))=                                                   
  \zeta_2 ((s_1,s_2,s_3),\bs{y};P;A_2)=
  \sum_{m,n=1}^\infty                                 
  \frac{e^{2\pi i\langle\bs{y},m\lambda_1+n\lambda_2\rangle}}{m^{s_1}n^{s_2}(m+n)^{s_3}},
\end{align}
which, when $\bs{y}={\bf 0}$, is the classical Mordell-Tornheim double sum.
In the case 
$\bs{y}=\lambda_1^\vee=\frac{2}{3}\alpha_1^\vee+\frac{1}{3}\alpha_2^\vee$, we have
\begin{equation}
\zeta_2((s_1,s_2,s_3),\lambda_1^\vee;SU(3))=  \sum_{m,n=1}^\infty 
  \frac{\varrho^{2m+n}}{m^{s_1}n^{s_2}(m+n)^{s_3}},
\label{A2-varrho}
\end{equation}
where $\varrho=e^{2\pi i/3}$ is the cube root of unity. 
The zeta-function corresponding to $Q$ is,
by \eqref{Lerch-zeta-L} and \eqref{Q2}, 
\begin{equation}
\begin{split}
  &\zeta_2((s_1,s_2,s_3),\bs{y};PU(3))=
  \zeta_2 ((s_1,s_2,s_3),\bs{y};Q;A_2)\\ 
    &=\sum_{\lambda\in Q_++\rho}\frac{e^{2\pi i\langle\bs{y},\lambda\rangle}}
    {\langle\alpha_1^\vee,\lambda\rangle^{s_1}\langle\alpha_2^\vee,\lambda\rangle^{s_2}\langle \alpha_1^\vee+\alpha_2^\vee,\lambda\rangle^{s_3}} 
    =\sum_{m,n=1 \atop m\equiv n\,(\text{mod}\,3)}^\infty 
     \frac{e^{2\pi i\langle\bs{y},m\lambda_1+n\lambda_2\rangle}}{m^{s_1}n^{s_2}(m+n)^{s_3}}.
\end{split}
\label{Lerch-zeta-A2}
\end{equation}
This can be regarded as a kind of ``partial zeta-function'' of $A_2$ type, the
double analogue of the partial (Riemann) zeta-function.

For $\mathbf{y}=y_1\alpha_1^\vee+y_2\alpha_2^\vee$, we can compute $\mathcal{P}(\mathbf{k},\mathbf{y};A_2)$ from their generating function given by 
\begin{equation}
\begin{split}
    F(\mathbf{t},\mathbf{y};A_2)&=
\frac{t_3}{t_3-t_1-t_2}\frac{t_1e^{t_1 \{y_1\}}}{e^{t_1}-1}\frac{t_2e^{t_2 \{y_2\}}}{e^{t_2}-1}\\
&\quad+    \frac{t_2}{t_2+t_1-t_3}\frac{t_1e^{t_1 \{y_1-y_2\}}}{e^{t_1}-1}\frac{t_3e^{t_3\{y_2\}}}{e^{t_3}-1}
\\
&\quad+    \frac{t_1}{t_1+t_2-t_3}\frac{t_2e^{t_2(1-\{y_1-y_2\})}}{e^{t_2}-1}\frac{t_3e^{t_3\{y_1\}}}{e^{t_3}-1},
\end{split}
\end{equation}
where $\{\cdot\}$ denotes the fractional part (see \cite[Section 9]{KMT-Mem}). 
For example, we have
\begin{multline} 
  \mathcal{P}((2,2,2),\bs{y};A_2)
  =\frac{1}{3780}
  +\frac{1}{90}(\{y_1\}-\{y_1-y_2\}-\{y_2\})
  \\
  \begin{aligned}
    &
    +\frac{1}{90}(
    -\{y_1\}^2
    -2 \{y_1-y_2\}\{y_1\}
    + \{y_1-y_2\}^2
    -\{y_2\}^2
    +2 \{y_1-y_2\} \{y_2\})
    \\
    &
    +\frac{1}{18}(
    -\{y_1\}^3
    +3 \{y_1-y_2\} \{y_1\}^2
    +3 \{y_2\}^3
    +3 \{y_1-y_2\} \{y_2\}^2
    )
    \\
    &
    +\frac{1}{18}(
    \{y_1\}^4
    -2 \{y_1-y_2\} \{y_1\}^3
    -3 \{y_1-y_2\}^2 \{y_1\}^2
    \\
    &\qquad\qquad
    -5 \{y_2\}^4
    -10 \{y_1-y_2\}\{y_2\}^3
    -3 \{y_1-y_2\}^2 \{y_2\}^2
    )
    \\
    &
    +\frac{1}{30}(
    \{y_1\}^5
    -5 \{y_1-y_2\} \{y_1\}^4
    +10 \{y_1-y_2\}^2 \{y_1\}^3
    \\
    &\qquad\qquad
    +5 \{y_2\}^5
    +15 \{y_1-y_2\} \{y_2\}^4
    +10 \{y_1-y_2\}^2 \{y_2\}^3
    )
    \\
    &
    +\frac{1}{30}(
    -\{y_1\}^6
    +4 \{y_1-y_2\} \{y_1\}^5
    -5 \{y_1-y_2\}^2 \{y_1\}^4
    \\
    &
    \qquad\qquad-\{y_2\}^6
    -4 \{y_1-y_2\} \{y_2\}^5
    -5 \{y_1-y_2\}^2 \{y_2\}^4
    )
  \end{aligned}
\label{Ber-A2-gen}
\end{multline}
(see \cite[(9.13)]{KMT-Mem}).  Hence, when $\bf{y}=\bf{0}$, it follows from 
\eqref{gene-Ber} and \eqref{Ber-A2-gen} that
$$\mathcal{P}((2,2,2),\mathbf{0};Q;A_2)=\frac{187}{2755620}.$$
Therefore we obtain
from Theorem \ref{thm:W-Z} that
\begin{equation}
\begin{split}
  \zeta_2 ((2,2,2);PU(3)) 
    &=\sum_{m,n=1 \atop m\equiv n\,(\text{mod}\,3)}^\infty 
     \frac{1}{m^{2}n^{2}(m+n)^{2}}=\frac{187}{688905}\pi^6.
\end{split}
\label{zeta-A2-val}
\end{equation}
Similarly we can compute 
\begin{align}
& \zeta_2 ((4,4,4);PU(3)) =\frac{3279473}{48475988686125} \pi^{12}, \label{zeta-A2-4}\\
& \zeta_2 ((6,6,6);PU(3)) =\frac{53109402098}{3020275543157103456225} \pi^{18}, \label{zeta-A2-6}\\
& \zeta_2 ((8,8,8);PU(3)) =\frac{178778564412743}{39097800024794787744890296875} \pi^{24}. \label{zeta-A2-8}
\end{align}
Also, in the case $\textbf{y}=\lambda_1^\vee=\frac{2}{3}\alpha_1^\vee+\frac{1}{3}\alpha_2^\vee$ (see \eqref{A2-varrho}), that is, $(y_1,y_2)=\left(\frac{2}{3},\frac{1}{3}\right)$, we can similarly obtain
\begin{align}
& \zeta_2((2,2,2),\lambda_1^\vee;SU(3))=\frac{53}{229635} \pi^6 \label{su3-2}\\
& \zeta_2((4,4,4),\lambda_1^\vee;SU(3))=\frac{1078771}{16158662895375} \pi^{12} \label{su3-4}\\
& \zeta_2((6,6,6),\lambda_1^\vee;SU(3))=\frac{88392335894}{5033792571928505760375}\pi^{18}\label{su3-6}\\
& \zeta_2((8,8,8),\lambda_1^\vee;SU(3))=\frac{1012923518531597}{221554200140503797221045015625}\pi^{24}.\label{su3-8}
\end{align}
Note that from the definition, we can confirm
\begin{align*}
\zeta_2((2p,2p,2p),\lambda_1^\vee;SU(3))&=\zeta_2((2p,2p,2p),\lambda_2^\vee;SU(3))\qquad (p\in \mathbb{N}),\\
\zeta_2((2p,2p,2p),\lambda_1^\vee;PU(3))&=\zeta_2((2p,2p,2p),\lambda_2^\vee;PU(3))\\
&=\zeta_2((2p,2p,2p),{\bf 0};PU(3))\qquad (p\in \mathbb{N}).
\end{align*}
In the next section, we will prove certain functional relations for 
$\zeta_2 ({\bf s},\bs{0};PU(3))$ including \eqref{zeta-A2-val}-\eqref{zeta-A2-8}.
\end{example}

\begin{remark}\label{Rem-SS}
In \cite[Section\,5]{SS}, Subbarao and Sitaramachandrarao proposed a problem of evaluating the double series 
$$\sum_{m,n=1}^\infty \frac{(-1)^{m-1}}{m^{k}n^{k}(m+n)^{k}},\ \sum_{m,n=1}^\infty \frac{(-1)^{m+n}}{m^{k}n^{k}(m+n)^{k}}\quad (k\in \mathbb{N}).$$
As for the case of odd $k$, the third-named author evaluated each series in terms of values of $\zeta(s)$ (see \cite{TPAMS,TMCOMP}). The case of even $k$ is still open. It follows from \eqref{A2-varrho} that the above formulas \eqref{su3-2}-\eqref{su3-8} imply certain answers to a problem analogous to that of Subbarao and Sitaramachandrarao.
\end{remark}

\begin{example} \label{exam-A_3}
We consider the $A_3$ type. 
Let $\Delta=\Delta(A_3)$ with $\Psi=\{ \alpha_1,\,\alpha_2,\,\alpha_3\}$, $\Delta_{+}=\{ \alpha_1,\,\alpha_2,\,\alpha_3,\,\alpha_1+\alpha_2,\,\alpha_2+\alpha_3,\,\alpha_1+\alpha_2+\alpha_3\}$, $P=\sum_{j=1}^{3}\mathbb{Z}\lambda_j$ and $Q=\sum_{j=1}^{3}\mathbb{Z}\alpha_j$. Analogously to \eqref{lambda-alpha}, we have
\begin{align}\label{lambda-alpha-A3}
\lambda_1=\frac{3}{4}\alpha_1+\frac{1}{2}\alpha_2+\frac{1}{4}\alpha_3,\quad
\lambda_2=\frac{1}{2}\alpha_1+\alpha_2+\frac{1}{2}\alpha_3,\quad  
\lambda_3=\frac{1}{4}\alpha_1+\frac{1}{2}\alpha_2+\frac{3}{4}\alpha_3.
\end{align}
It is known that $P/Q\simeq \mathbb{Z}/4\mathbb{Z}$ (see \cite{Bourbaki}). 
Therefore there is a unique intermediate lattice $L_1$ with 
$P\supsetneq L_1 \supsetneq Q$, satisfying $(L_1:Q)=2$. 
The group corresponding to $P$ (resp. $Q$) is $SU(4)$ (resp. $PU(4)$).   The group
$G=G(L_1)$ is $SU(4)/\{\pm 1\}$, which is known to be isomorphic to $SO(6)$.

We know (see \cite{KM2}) that 
\begin{align}\label{zeta-A3-P}
 & \zeta_3 ({\bf s},\bs{y};SU(4))=\zeta_3 ({\bf s},\bs{y};P;A_3) \\
   & =\sum_{m_1,m_2,m_3=1}^\infty 
     \frac{e^{2\pi i\langle\bs{y},m_1\lambda_1+m_2\lambda_2+m_3\lambda_3\rangle}}{m_1^{s_1}m_2^{s_2}m_3^{s_3}(m_1+m_2)^{s_4}
     (m_2+m_3)^{s_5}(m_1+m_2+m_3)^{s_6}}. \notag 
\end{align}
Note that $\zeta_3 ({\bf s},\bs{0};SU(4))=\zeta_3 ({\bf s};A_3)$ (see \cite{MT}). 
For example, similarly to \eqref{Ber-A2-gen}, we can compute 
$\mathcal{P}((2,2,2,2,2,2),\bs{y};A_3)$ 
from the generating function which was already given in \cite[Example\,2]{KM5}, though it is too long to write it here. Hence we can obtain from \eqref{gene-Ber} that
$$\mathcal{P}((2,2,2,2,2,2),\lambda_1^\vee;P;A_3)=-\frac{19329337}{14283291230208000},$$
where $\lambda_1^\vee=\frac{3}{4}\alpha_1^\vee+\frac{1}{2}\alpha_2^\vee+\frac{1}{4}\alpha_3^\vee$. 
Therefore we obtain 
from Theorem \ref{thm:W-Z} that
\begin{align}
& \zeta_3((2,2,2,2,2,2),\lambda_1^\vee;SU(4))=\zeta_3((2,2,2,2,2,2),\lambda_1^\vee;P;A_3) \label{A3-lam1} \\
& \qquad =\sum_{m_1,m_2,m_3=1}^\infty 
     \frac{i^{3l+2m+n}}{m_1^{2}m_2^{2}m_3^{2}(m_1+m_2)^{2}
     (m_2+m_3)^{2}(m_1+m_2+m_3)^{2}}\notag \\
& \qquad = -\frac{19329337}{2678117105664000}\pi^{12}. \notag
\end{align}
Concerning $L_1$ and $Q$, similarly to \eqref{Q2}, we can show
\begin{align}
& (L_1)_++\rho =\left\{ \sum_{j=1}^{3}m_j\lambda_j\,\bigg|\,(m_j) \in \mathbb{N}^3,\ m_1 \equiv m_3\ (\text{mod}\ 2)\right\},\label{A3-L}\\
& Q_{+}+\rho  =\left\{ \sum_{j=1}^{3}m_j\lambda_j\,\bigg|\,(m_j) \in \mathbb{N}^3,\ m_1+2m_2+3m_3 \equiv 2\ (\text{mod}\ 4)\right\}.
\label{A3-Q}
\end{align}
In fact, letting $\lambda=\sum_{j=1}^3 m_j\lambda_j\in Q_++\rho$ ($m_j\in\mathbb{N}$), we have $\lambda-\rho=\sum_{j=1}^3 n_j\lambda_j\in Q_+$, where 
$n_j=m_j-1$ ($1\leq j\leq 3$).
From \eqref{lambda-alpha-A3} we have
$$
  \sum_{j=1}^3 n_j\lambda_j=\frac{3n_1+2n_2+n_3}{4}\alpha_1+
   \frac{n_1+2n_2+n_3}{2}\alpha_2+\frac{n_1+2n_2+3n_3}{4}\alpha_3,
$$
which belongs to $Q$.  Therefore
\begin{align*}
&{\rm (i)}\;3n_1+2n_2+n_3\equiv 0\;{\rm (mod\; 4)};\\
&{\rm (ii)}\;n_1+2n_2+n_3\equiv 0\;{\rm (mod\; 2)};\\
&{\rm (iii)}\;n_1+2n_2+3n_3\equiv 0\;{\rm (mod\; 4)}.
\end{align*}
We see that (iii) implies
\begin{align*}
{\rm (iv)}\; n_1\equiv n_3 \;{\rm (mod\; 2)},
\end{align*}
which automatically implies (ii).    Moreover we find that (iii) and (iv)
imply (i).   Therefore the only essential condition is
(iii), which is equivalent to the congruence condition in \eqref{A3-Q}.

Next, define the homomorphism
$\eta:P\cong \mathbb{Z}^3 \to\mathbb{Z}/4\mathbb{Z}$ by
$$
\eta(n_1,n_2,n_3)=n_1+2n_2+3n_3\;{\rm (mod\; 4)}.
$$
Then from the above argument we find that $Q={\rm Ker}\;\eta$.   Let $L_1^*$ be the set of all $(n_1,n_2,n_3)$ 
satisfying $n_1\equiv n_3 \;{\rm (mod\; 2)}$.  Then
$\{0\}\subsetneq \eta(L_1^*)\subsetneq \mathbb{Z}/4\mathbb{Z}$, hence
$Q\subsetneq L_1^* \subsetneq P$.   Therefore $L_1^*$ should be equal to $L_1$,
which implies \eqref{A3-L}.

Let $\bs{y}=y_1\alpha_1^\vee+y_2\alpha_2^\vee+y_3\alpha_3^\vee$. From \eqref{A3-L} and \eqref{A3-Q} we obtain
\begin{align}\label{zeta-A3-L}
  &\zeta_3({\bf s},\bs{y};SO(6))=\zeta_3 ({\bf s},\bs{y};L_1;A_3)\\ 
  &=\sum_{m_1,m_2,m_3=1 \atop m_1\equiv m_3\,(\text{mod}\,2)}^\infty 
     \frac{e^{2\pi i\langle\bs{y},m_1\lambda_1+m_2\lambda_2+m_3\lambda_3\rangle}}{m_1^{s_1}m_2^{s_2}m_3^{s_3}(m_1+m_2)^{s_4} 
     (m_2+m_3)^{s_5}(m_1+m_2+m_3)^{s_6}}, \notag
\end{align}
\begin{align}\label{zeta-A3-Q}
  &\zeta_3({\bf s},\bs{y};PU(4))=\zeta_3 ({\bf s},\bs{y};Q;A_3)\\
 &=\sum_{m_1,m_2,m_3=1 \atop m_1+2m_2+3m_3 \equiv 2\ (\text{mod}\ 4)}^\infty 
     \frac{e^{2\pi i\langle\bs{y},m_1\lambda_1+m_2\lambda_2+m_3\lambda_3\rangle}}{m_1^{s_1}m_2^{s_2}m_3^{s_3}(m_1+m_2)^{s_4}
     (m_2+m_3)^{s_5}(m_1+m_2+m_3)^{s_6}}.\notag
\end{align}
\end{example}

\begin{example} \label{Ex-BC}
We consider the case of $B_r$ and of $C_r$ types. 
The simply-connected group $\widetilde{G}$ in the $B_r$ case is the spinor group
$Spin(2r+1)$, and $\widetilde{G}/\widetilde{Z}=SO(2r+1)$, where $\widetilde{Z}$ is the center of $\widetilde{G}$. In the $C_r$ case
$\widetilde{G}=Sp(r)$, and $\widetilde{G}/\widetilde{Z}$ is the projective symplectic group
$PSp(r)$.
From \cite{KM2,KMTJC,KM3}, we already gave that 
\begin{align*}
  &\zeta_2({\bf s},\bs{y};Spin(5))=\zeta_2 ({\bf s},\bs{y};P;B_2) \\
    &\quad=\sum_{m_1,m_2=1}^\infty 
     \frac{e^{2\pi i\langle\bs{y},m_1\lambda_1+m_2\lambda_2\rangle}}{m_1^{s_1}m_2^{s_2}(m_1+m_2)^{s_3}(2m_1+m_2)^{s_4}},\\
  &\zeta_2({\bf s},\bs{y};Sp(2))=\zeta_2 ({\bf s},\bs{y};P;C_2)\\ 
    &\quad=\sum_{m_1,m_2=1}^\infty 
     \frac{e^{2\pi i\langle\bs{y},m\lambda_1+n\lambda_2\rangle}}{m_1^{s_1}m_2^{s_2}(m_1+m_2)^{s_3}(m_1+2m_2)^{s_4}},\\
  &\zeta_3({\bf s},\bs{y};Spin(7))= \zeta_3({\bf s},\bs{y};P;B_3)\\ 
    &\quad=\sum_{m_1,m_2,m_3=1}^\infty 
     \frac{e^{2\pi i\langle\bs{y},m_1\lambda_1+m_2\lambda_2+m_3\lambda_3\rangle}}{m_1^{s_1}m_2^{s_2}m_3^{s_3}(m_1+m_2)^{s_4}(m_2+m_3)^{s_5}(2m_2+m_3)^{s_6}} \\
& \quad \times \frac{1}{(m_1+m_2+m_3)^{s_7}(m_1+2m_2+m_3)^{s_8}(2m_1+2m_2+m_3)^{s_9}}, \\
  &\zeta_3 ({\bf s},\bs{y};Sp(3))= \zeta_3 ({\bf s},\bs{y};P;C_3)\\ 
    &\quad=\sum_{m_1,m_2,m_3=1}^\infty 
     \frac{e^{2\pi i\langle\bs{y},m_1\lambda_1+m_2\lambda_2+m_3\lambda_3\rangle}}{m_1^{s_1}m_2^{s_2}m_3^{s_3}(m_1+m_2)^{s_4}(m_2+m_3)^{s_5}(m_2+2m_3)^{s_6}} \\
&  \quad \times \frac{1}{(m_1+m_2+m_3)^{s_7}(m_1+m_2+2m_3)^{s_8}(m_1+2m_2+2m_3)^{s_9}}.
\end{align*}
We know that $(P:Q)=2$ in the case of $B_r$ and of $C_r$ types (see \cite{Bourbaki}). 
Therefore the lattice $L$ with $P\supset L \supset Q$ coincides with $P$ or $Q$. 
We show
\begin{align*}
& Q_{+}(B_2)+\rho  =\left\{ \sum_{j=1}^{2}m_j\lambda_j\,\bigg|\,(m_j) \in \mathbb{N}^2,\ m_2 \equiv 1\ (\text{mod}\ 2)\right\},\\
& Q_{+}(C_2)+\rho  =\left\{ \sum_{j=1}^{2}m_j\lambda_j\,\bigg|\,(m_j) \in \mathbb{N}^2,\ m_1 \equiv 1\ (\text{mod}\ 2)\right\},\\
& Q_{+}(B_3)+\rho  =\left\{ \sum_{j=1}^{3}m_j\lambda_j\,\bigg|\,(m_j) \in \mathbb{N}^3,\ m_3 \equiv 1\ (\text{mod}\ 2)\right\},\\
& Q_{+}(C_3)+\rho  =\left\{ \sum_{j=1}^{3}m_j\lambda_j\,\bigg|\,(m_j) \in \mathbb{N}^3,\ m_1 \equiv m_3\ (\text{mod}\ 2)\right\}.
\end{align*}
In fact, we consider, for example, the case of $B_3$. Analogously to Examples \ref{Exam-A_2} and \ref{exam-A_3}, we have
\begin{align}\label{lambda-alpha-B3}
\lambda_1=\alpha_1+\alpha_2+\alpha_3,\quad
\lambda_2=\alpha_1+2\alpha_2+{2}\alpha_3,\quad  
\lambda_3=\frac{1}{2}\alpha_1+\alpha_2+\frac{3}{2}\alpha_3.
\end{align}
Let $\lambda=\sum_{j=1}^3 m_j\lambda_j\in Q_{+}+\rho$ ($m_j\in\mathbb{N}$). Then $\lambda-\rho=\sum_{j=1}^3 n_j\lambda_j\in Q_+$, where $n_j=m_j-1$ ($1\leq j\leq 3$).
it follows from \eqref{lambda-alpha-B3} that
$$
  \lambda-\rho=\frac{2n_1+2n_2+n_3}{2}\alpha_1+
   (n_1+2n_2+n_3)\alpha_2+\frac{2n_1+4n_2+3n_3}{2}\alpha_3,
$$
which belongs to $Q$.  Therefore $n_3\equiv 0 \;{\rm (mod\; 2)}$, that is, 
$m_3\equiv 1 \;{\rm (mod\; 2)}$. 
The cases of $B_2$, $C_2$ and $C_3$ can be similarly treated. 

Therefore we obtain 
\begin{align}
  &\zeta_2({\bf s},\bs{y};SO(5))=\zeta_2 ({\bf s},\bs{y};Q;B_2) \\
    &\quad=\sum_{m_1,m_2=1 \atop m_2 \equiv 1\ (\text{mod}\ 2) }^\infty 
     \frac{e^{2\pi i\langle\bs{y},m_1\lambda_1+m_2\lambda_2\rangle}}{m_1^{s_1}m_2^{s_2}(m_1+m_2)^{s_3}(2m_1+m_2)^{s_4}},\notag\\
  &\zeta_2({\bf s},\bs{y};PSp(2))=\zeta_2 ({\bf s},\bs{y};Q;C_2) \label{PSp-2}\\ 
    &\quad=\sum_{m_1,m_2=1\atop m_1 \equiv 1\ (\text{mod}\ 2)}^\infty 
     \frac{e^{2\pi i\langle\bs{y},m_1\lambda_1+m_2\lambda_2\rangle}}{m_1^{s_1}m_2^{s_2}(m_1+m_2)^{s_3}(m_1+2m_2)^{s_4}},\notag\\
  &\zeta_3({\bf s},\bs{y};SO(7))= \zeta_3({\bf s},\bs{y};Q;B_3)\\ 
    &\quad=\sum_{m_1,m_2,m_3=1 \atop m_3 \equiv 1\ (\text{mod}\ 2)}^\infty 
     \frac{e^{2\pi i\langle\bs{y},m_1\lambda_1+m_2\lambda_2+m_3\lambda_3\rangle}}{m_1^{s_1}m_2^{s_2}m_3^{s_3}(m_1+m_2)^{s_4}(m_2+m_3)^{s_5}(2m_2+m_3)^{s_6}} \notag\\
& \quad \times \frac{1}{(m_1+m_2+m_3)^{s_7}(m_1+2m_2+m_3)^{s_8}(2m_1+2m_2+m_3)^{s_9}}, \notag\\
  &\zeta_3 ({\bf s},\bs{y};PSp(3))= \zeta_3 ({\bf s},\bs{y};Q;C_3)\\ 
    &\quad=\sum_{m_1,m_2,m_3=1 \atop m_1 \equiv m_3\ (\text{mod}\ 2)}^\infty 
     \frac{e^{2\pi i\langle\bs{y},m_1\lambda_1+m_2\lambda_2+m_3\lambda_3\rangle}}{m_1^{s_1}m_2^{s_2}m_3^{s_3}(m_1+m_2)^{s_4}(m_2+m_3)^{s_5}(m_2+2m_3)^{s_6}} \notag\\
&  \quad \times \frac{1}{(m_1+m_2+m_3)^{s_7}(m_1+m_2+2m_3)^{s_8}(m_1+2m_2+2m_3)^{s_9}}.\notag
\end{align}
Now we evaluate special values.   Consider the case $G=PSp(2)$.   
Theorem \ref{thm:W-Z} of $C_2$ type with $\nu={\bf y}={\bf 0}$ gives that 
\begin{align}
  & \zeta_2 ((2k,2l,2l,2k);PSp(2)) \in \mathbb{Q}\cdot \pi^{4(k+l)} \label{W-Z-C2}
\end{align}
for $k,l \in \mathbb{N}$. Actually we have already given the generating function of $C_2$ type (see \cite[Examples 1]{KM5}) and also the generalized Bernoulli function $\mathcal{P}((2,2,2,2),\bs{y};C_2)$ (see \cite[Examples 3]{KM5}). Similarly we can give explicit forms of $\mathcal{P}((2k,2l,2l,2k),\bs{y};C_2)$ $(k,l\in \mathbb{N})$, though it is too complicated to describe them here.   (We will give the explicit form of $\mathcal{P}((2,4,4,2,\bs{y};C_2)$ in Appendix.)   Using these results, we can obtain the following explicit formulas: 
\begin{align*}
&\zeta_2((2,2,2,2);PSp(2))=\frac{\pi^8}{322560},\\
&\zeta_2((2,4,4,2);PSp(2))=\frac{29}{3832012800}\pi^{12},\\
&\zeta_2((4,2,2,4);PSp(2))=\frac{13}{3832012800}\pi^{12},\\
&\zeta_2((4,4,4,4);PSp(2))=\frac{479}{55794106368000}\pi^{16}.
\end{align*}
We will give another type of evaluation formulas in the following sections (see Examples \ref{Ex-C_2} and \ref{Exam-LL}).
\end{example}

\begin{remark}
From \eqref{1-2} and \eqref{wittenequalroot} we see that the original volume
formula of Witten is restricted to the case 
${\bf s}=2(k_{\alpha})_{\alpha\in\Delta_+}$, where all the
$k_{\alpha}$s are the same.   Our Theorem \ref{thm:W-Z} is able to treat a wider
class of special values, such as the $(2,4,4,2)$ and $(4,2,2,4)$ cases in the above.
\end{remark}

\ 

\section{Functional relations and various evaluation formulas}\label{sec-5}

In the preceding section, we gave explicit forms of several zeta-functions of Lie 
groups, and especially gave some evaluation formulas in the cases of 
$A_2$, $C_2$ and $A_3$ 
types at even integer points, by computing generating functions of their values. 
However, it seems a difficult problem to evaluate zeta-functions of Lie 
groups at arbitrary positive integer points by that method. 
In this section, we give various evaluation formulas for zeta values in the cases of $A_2$ and $C_2(\simeq B_2)$ types, by proving certain functional relations among them which are analogues of our previous results given in \cite{KMT,KMTJC,KM5,MT,TsC}. 
The advantage of the method in this section is that it may treat the special values at
${\bf s}={\bf l}=(l_{\alpha})_{\alpha\in\Delta_+}$, $l_{\alpha}\in{\Bbb N}$ and some
of them are odd.

First we consider 
$\zeta_2 ({\bf s};PU(3))=\zeta_2 ({\bf s},\bs{0};Q;A_2)$ and prove the following theorem, where $\phi(s,\alpha)$ is the Lerch zeta-function defined by \eqref{Lerch-def}. 
. 

\begin{theorem} \label{T-4-1} 
For $p,q \in \mathbb{N}$, 
\begin{equation}
\begin{split}
  & 3\bigg\{ \zeta_2 ((p,q,s);PU(3)) + (-1)^{p} \zeta_2 ((p,s,q);PU(3)) +(-1)^q \zeta_2 ((q,s,p);PU(3)) \bigg\}\\
& =-\sum_{\vv=0}^{p}\binom{p+q-\vv-1}{q-1}(-1)^{\vv}\frac{(2\pi i)^{\vv}}{\vv!}\sum_{a=0}^{2}B_{\vv}\left(\frac{a}{3}\right)\phi(s+p+q-\vv,-a/3) \\
& \ \ -\sum_{\vv=0}^{q}\binom{p+q-\vv-1}{p-1}\frac{(2\pi i)^{\vv}}{\vv!}\sum_{a=0}^{2}B_{\vv}\left(\frac{a}{3}\right)\phi(s+p+q-\vv,a/3) 
\end{split}
\label{T-4-1-eq}
\end{equation}
holds for $s\in \mathbb{C}$ except for singularities of functions on the both sides.
\end{theorem}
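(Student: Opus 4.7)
The plan is to derive the functional relation via a character-sum decomposition of the congruence $m\equiv n\pmod{3}$, followed by a partial-fraction identity and the Fourier expansion of Bernoulli polynomials, in the spirit of \cite{KMT,KMTJC,MT,TsC}. First, using the identity $\mathbf{1}_{m\equiv n\,(\mathrm{mod}\,3)}=\tfrac13\sum_{a=0}^{2}e^{2\pi i a(m-n)/3}$, rewrite each $\zeta_2$ on the left of \eqref{T-4-1-eq} as $\tfrac13\sum_{a=0}^{2}T(\cdot;a/3)$, where
\[
T(u,v,w;\alpha):=\sum_{m,n\ge 1}\frac{e^{2\pi i(m-n)\alpha}}{m^{u}n^{v}(m+n)^{w}}
\]
is the twisted Mordell--Tornheim series. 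The theorem then reduces, for each real $\alpha$, to the ``twisted Tornheim relation''
\[
T(p,q,s;\alpha)+(-1)^{p}T(p,s,q;\alpha)+(-1)^{q}T(q,s,p;\alpha)=\widetilde{R}(s;\alpha),
\]
where $\widetilde{R}(s;\alpha)$ is the $a$-th summand of the right-hand side of \eqref{T-4-1-eq} (with $B_{\tau}(a/3)\phi(\cdot,\pm a/3)$ replaced by $B_{\tau}(\{\alpha\})\phi(\cdot,\pm\alpha)$). Summing $\alpha=a/3$ over $a=0,1,2$ and multiplying by $3$ then yields \eqref{T-4-1-eq}.

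To prove this twisted relation, I apply the partial-fraction identity
\[
\frac{1}{X^{p}Y^{q}}=\sum_{\tau=1}^{p}\binom{p+q-\tau-1}{q-1}\frac{1}{X^{\tau}(X+Y)^{p+q-\tau}}+\sum_{\tau=1}^{q}\binom{p+q-\tau-1}{p-1}\frac{1}{Y^{\tau}(X+Y)^{p+q-\tau}},
\]
obtained by residue calculus at $X=0$ and $Y=0$. For each of the three $T$-sums, taking $(X,Y)\in\{(m,n),(m,-(m+n)),(-(m+n),n)\}$---the latter two obtained via the involution $m\leftrightarrow n$, which is what produces the prefactors $(-1)^{p}$ and $(-1)^{q}$---the series reduce to linear combinations of products $\phi(\tau,\pm\alpha)\,\phi(s+p+q-\tau,\mp\alpha)$ after summing over one summation variable at a time. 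The Fourier identity
\[
\phi(\tau,\alpha)+(-1)^{\tau}\phi(\tau,-\alpha)=-\frac{(2\pi i)^{\tau}}{\tau!}B_{\tau}(\{\alpha\})\qquad(\tau\ge 2,\ \alpha\notin\mathbb{Z}),
\]
together with its analogues at $\tau=0$ (using $B_{0}=1$ and the analytic continuation $\phi(0,\alpha)+\phi(0,-\alpha)=-1$) and $\tau=1$ (using $B_{1}(x)=x-\tfrac12$), then collapses each such pair into a factor $B_{\tau}(\{\alpha\})$, producing exactly the terms on the right-hand side of \eqref{T-4-1-eq}. The asymmetric appearance of $(-1)^{\tau}$ in only the first of the two $\tau$-sums and the two different ranges of $\tau$ (up to $p$ vs.\ up to $q$) reflect the two distinct partial-fraction contributions combined with the signs from the $m\leftrightarrow n$ involutions.

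The main obstacle is the careful bookkeeping of the various $(-1)$-factors---arising from the prefactors $(-1)^{p},(-1)^{q}$ on the LHS, from the Fourier identity, and from the $m\leftrightarrow n$ involutions---and the separate treatment of the boundary indices $\tau=0,1$, together with the exceptional case $\alpha\in\mathbb{Z}$ (i.e.\ $a=0$), where the Lerch values degenerate to the Riemann zeta and the Bernoulli identity must be interpreted via analytic continuation. The calculation is initially performed in a half-plane of $s$ where all Mordell--Tornheim and Lerch series converge absolutely; the identity then extends to all $s\in\mathbb{C}$ by Proposition \ref{Prop-conti} and the classical meromorphic continuation of $\phi(s,\alpha)$.
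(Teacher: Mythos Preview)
Your strategy---character-sum decomposition followed by a twisted Tornheim relation---is exactly the route the paper takes, but the execution has a genuine gap.

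The character-sum step is correct: each $\zeta_2(\cdot;PU(3))$ equals $\tfrac13\sum_{a=0}^{2}T(\cdot;a/3)$. What fails is the claim that for each fixed $\alpha$ the three series $T(p,q,s;\alpha)$, $T(p,s,q;\alpha)$, $T(q,s,p;\alpha)$ (all with the \emph{same} twist $e^{2\pi i(m-n)\alpha}$) satisfy a closed relation whose right side is a combination of Lerch products. They do not. The correct pointwise identity---which is precisely the paper's Lemma~\ref{L-4-4} at $x=e^{-2it}$, $t=2\pi a/3$ (a specialization of Nakamura's result)---has third term $(-1)^q T(q,s,p;-a/3)$, not $(-1)^q T(q,s,p;a/3)$. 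Equivalently, the three sums carry the twists $e^{-2\pi i(2l+m)a/3}$, $e^{2\pi i(l-m)a/3}$, $e^{-2\pi i(l+2m)a/3}$; these agree with $e^{2\pi i(l-m)a/3}$ only modulo $3$ in the exponent, and for the third sum the sign flips. The three only become interchangeable \emph{after} summing over $a=0,1,2$, because $\{-a/3\bmod 1:a=0,1,2\}=\{a/3:a=0,1,2\}$. So your term-by-term reduction is false for $a=1,2$, though the summed identity survives.

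Consequently, your proposed proof of the ``individual $\alpha$'' relation via partial fractions cannot succeed as stated. Applying the decomposition of $1/(X^pY^q)$ with $(X,Y)=(m,n)$ to $T(p,q,s;\alpha)$ does \emph{not} produce products $\phi(\tau,\pm\alpha)\phi(\cdot,\mp\alpha)$; it produces twisted double-zeta pieces $\sum_{m,n}e^{2\pi i(m-n)\alpha}m^{-\tau}(m+n)^{-(s+p+q-\tau)}$. The hoped-for cancellation of these double pieces among the three $T$-sums occurs only when the third sum carries twist $-\alpha$; with your uniform twist the residual double terms do not vanish. The paper avoids this by first proving the two-parameter identity Lemma~\ref{L-4-4} (via a sum over $l\in\mathbb{Z}\setminus\{0\}$ and the Fourier--Bernoulli formula, Lemmas~\ref{Lem-5-1}--\ref{L-4-3}) and only then specializing and summing over $a$. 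If you want to keep the partial-fraction route, you must either correct the twist on the third term or work with the two-parameter $(x,t)$ form from the start.
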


\begin{example} \label{Exam-4-4}
It should be emphasized that Theorem \ref{T-4-1} gives evaluation formulas for $\zeta_2 ((a,b,c);PU(3))$ when $a+b+c$ is odd. For example, putting $(p,q,s)=(1,1,1)$ in \eqref{T-4-1-eq}, we have
\begin{equation}
\begin{split}
  3\zeta_2 ((1,1,1);PU(3)) 
& =\sum_{\vv=0}^{1}(-1)^{\vv}\frac{(2\pi i)^{\vv}}{\vv!}\sum_{a=0}^{2}B_{\vv}\left(\frac{a}{3}\right)\sum_{m=1}^\infty \frac{\varrho^{-ma}}{m^{3-\vv}} \\
& \qquad +\sum_{\vv=0}^{1}\frac{(2\pi i)^{\vv}}{\vv!}\sum_{a=0}^{2}B_{\vv}\left(\frac{a}{3}\right)\sum_{m=1}^\infty \frac{\varrho^{ma}}{m^{3-\vv}}. 
\end{split}
\label{A2-L-Value-2}
\end{equation}
We can easily check that
\begin{equation}\label{B_1}
\sum_{a=0}^{2}\varrho^{la}B_1\left(\frac{a}{3}\right)=
\begin{cases}
-\frac{1}{2} & ( l \equiv 0 \ (\text{mod\ }3)) \\
-\frac{1}{2}-\frac{1}{2\sqrt{3}}i  & ( l \equiv 1 \ (\text{mod\ }3)) \\
-\frac{1}{2}+\frac{1}{2\sqrt{3}}i  & ( l \equiv 2 \ (\text{mod\ }3)). 
\end{cases}
\end{equation}
Then \eqref{A2-L-Value-2} can be rewritten to 
\begin{align}\label{1,1,1}
\zeta_2 ((1,1,1);PU(3))& =\frac{2}{27}\zeta(3)+\frac{2\pi}{3\sqrt{3}}L(2,\chi_3), 
\end{align}
where we denote by $\chi_3$ the primitive Dirichlet character of conductor $3$.
This is an analogue of $\zeta_2 ((1,1,1);SU(3))=2\zeta(3)$ (see \cite{To}). 
Similarly, setting $(p,q,s)=(1,2,2)$ in \eqref{T-4-1-eq}, and using the relations 
\eqref{B_1} and
\begin{equation}\label{B_2}
\sum_{a=0}^{2}\varrho^{la}B_2\left(\frac{a}{3}\right)=
\begin{cases}
\frac{1}{18} & ( l \equiv 0 \ (\text{mod\ }3)) \\
\frac{2}{9} & ( l \equiv 1,2 \ (\text{mod\ }3)),
\end{cases}
\end{equation}
we can obtain
\begin{align}\label{2,2,2}
\zeta_2 ((2,2,1);PU(3))& =-\frac{1}{81}\zeta(5)+\frac{35\pi^2}{243}\zeta(3)-\frac{2\pi}{3\sqrt{3}}L(4,\chi_3).
\end{align}
The above formulas \eqref{1,1,1} and \eqref{2,2,2} can also be deduced by using
\begin{align}\label{phiequalL}
\phi(s,1/3)-\phi(s,2/3)=2i\sum_{m=1}^\infty \frac{\sin(2\pi m/3)}{m^s}=
\sqrt{3}iL(s,\chi_3)
\end{align}
instead of \eqref{B_1}, \eqref{B_2}.
A more general result will be given in the next section (see Theorem \ref{Th-6-2}). 

By the partial fraction decomposition, we have
\begin{align*}
\zeta_2 ((1,1,1);PU(3))&=\sum_{m,n=1 \atop m\equiv n \ (\text{mod\ }3)}^\infty \frac{1}{mn(m+n)}=2\sum_{m,n=1 \atop m\equiv n \ (\text{mod\ }3)}^\infty \frac{1}{m(m+n)^2}.
\end{align*}
Hence combining with \eqref{1,1,1} we obtain 
\begin{equation}
\sum_{m,n=1 \atop m\equiv n \ (\text{mod\ }3)}^\infty \frac{1}{m(m+n)^2}=\frac{1}{27}\zeta(3)+\frac{\pi}{3\sqrt{3}}L(2,\chi_3). \label{A2-Value}
\end{equation}
This can be regarded as a formula for a partial sum of the double zeta value, analogously to the well-known result given by Euler (cf. \cite{Kaneko}):
$$\sum_{m,n=1}^\infty \frac{1}{m(m+n)^2}=\zeta(3).$$
\end{example}

\begin{remark}
Setting $(p,q,s)=(2k,2k,2k)$ in Theorem \ref{T-4-1} and using the fact
\begin{equation}                                                                       
B_j (x) = -\frac{j!}{(2\pi i)^j}                                                        
\lim_{M \rightarrow \infty} \sum_{\substack{ m=-M \\ m \ne 0}}^{M} \frac{e^{2\pi imx}}
{m^j}  \qquad (j \in {\mathbb{N}};\ 0\leq x<1)                                        
\label{eq:berfu}                                                                       
\end{equation}
(see \cite[p.\,266]{Ap}), we obtain
\begin{align}                                                                          
  & \zeta_2 ((2k,2k,2k);PU(3)) \label{Wit-Pu}\\                                        
  &=\frac{(2\pi i)^{6k}}{9}\sum_{\vv=0}^{2k}\binom{4k-\vv-1}{2k-1}\sum_{a=0}^{2}
\frac{B_{\vv}\left(\frac{a}{3}\right)}{\vv!}\frac{B_{6k-\vv}\left(\frac{a}{3}\right)}
{(6k-\vv)!}\quad (k\in \mathbb{N}), \notag                                             
\end{align}
which is an explicit form of \eqref{thm:W-Z} for $PU(3)$ and includes
\eqref{zeta-A2-val}-\eqref{zeta-A2-8}.
\end{remark}

Now we give the proof of Theorem \ref{T-4-1}. 
We first prepare the following lemma which can be proved by the same method as introduced in \cite{KMTJC}. In fact, this lemma in the case when $p$ and $q$ are even has already been proved in \cite[(7.55)]{KMTJC}. We use the notation $\phi(s):=\phi(s,1/2)=(2^{1-s}-1)\zeta(s)$ and $\la_m:=(1+(-1)^m)/2$ for $m\in \mathbb{Z}$. 

\begin{lemma}\label{Lem-5-1} 
For $p\in \mathbb{N}$, $s \in \mathbb{R}$ with $s>1$ and $x\in \mathbb{C}$ with $|x|\leq 1$,
\begin{equation}
\begin{split}
& \sum_{l\not=0,\,m\geq 1 \atop l+m\not=0} \frac{(-1)^{l+m}x^m e^{i(l+m)\theta}}{l^{p}m^{s}(l+m)^{q}} \\
& \ \ -2\sum_{j=0}^{p}\ \phi(p-j)\varepsilon_{p-j} \sum_{\xi=0}^{j}\binom{q-1+j-\xi}{q-1}(-1)^{j-\xi}\sum_{m=1}^\infty \frac{(-1)^{m}x^m e^{im\theta}}{m^{s+q+j-\xi}}\frac{(i\theta)^{\xi}}{\xi!} \\
& \ \ +2\sum_{j=0}^{q}\ \phi(q-j)\varepsilon_{q-j} \sum_{\xi=0}^{j}\binom{p-1+j-\xi}{p-1}(-1)^{p-1}\sum_{m=1}^\infty \frac{x^m }{m^{s+p+j-\xi}}\frac{(i\theta)^{\xi}}{\xi!}=0 
\end{split}
\label{eq-5-6}
\end{equation}
holds for $\theta \in [-\pi,\pi]$. 
\end{lemma}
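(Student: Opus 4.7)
The plan is to adapt the approach of \cite{KMTJC} explicitly cited in the statement. Applying the partial fraction decomposition
\begin{equation*}
\frac{1}{l^{p}(l+m)^{q}}
=\sum_{j=1}^{p}\frac{(-1)^{p-j}\binom{p+q-j-1}{q-1}}{m^{p+q-j}\,l^{j}}
+\sum_{k=1}^{q}\frac{(-1)^{p}\binom{p+q-k-1}{p-1}}{m^{p+q-k}(l+m)^{k}}
\end{equation*}
to the kernel of the left-hand side, I would factor $(-1)^{m}x^{m}e^{im\theta}/m^{s}$ out of each $m$-summand and interchange the order of summation, reducing the problem to the evaluation of $\sum_{l\neq 0,\,-m}(-1)^{l}e^{il\theta}/l^{j}$ and, after the shift $l'=l+m$, of $\sum_{l'\neq 0,\,m}(-1)^{l'}e^{il'\theta}/(l')^{k}$.

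The key identity is
\begin{equation*}
\sum_{l\in\mathbb{Z}\setminus\{0\}}\frac{(-1)^{l}e^{il\theta}}{l^{j}}
=2\sum_{\xi=0}^{j}\phi(j-\xi)\,\varepsilon_{j-\xi}\,\frac{(i\theta)^{\xi}}{\xi!},
\qquad \theta\in(-\pi,\pi),
\end{equation*}
which follows from \eqref{eq:berfu} applied at $x=\tfrac{1}{2}+\theta/(2\pi)$, together with the Taylor expansion $B_{j}(\tfrac{1}{2}+y)=\sum_{\xi=0}^{j}\binom{j}{\xi}B_{j-\xi}(\tfrac{1}{2})y^{\xi}$ and the evaluation $B_{n}(\tfrac{1}{2})=-2\cdot n!\,\phi(n)\varepsilon_{n}/(2\pi i)^{n}$. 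Applying this identity to each inner sum (after subtracting the excluded term at $l=-m$, respectively $l'=m$) and then reindexing $(j,\xi)\mapsto(j_{L},\xi_{L})$ via $j_{L}=p-j+\xi$, $\xi_{L}=\xi$ in the first piece and analogously $j_{L}=q-k+\eta$, $\xi_{L}=\eta$ in the second, matches exactly the two $\phi$-sums of the lemma but only over the restricted ranges $(j_{L},\xi_{L})\neq(p,0)$ and $(j_{L},\xi_{L})\neq(q,0)$ respectively.

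The missing corner terms are supplied precisely by the ``error'' contributions arising from subtracting $l=-m$ and $l'=m$: after summing the $j$-index via the hockey-stick identity $\sum_{j=1}^{p}\binom{p+q-j-1}{q-1}=\binom{p+q-1}{q}$ and its counterpart, the error from the $1/(l+m)^{k}$-piece equals the $(j_{L},\xi_{L})=(p,0)$ term of the first $\phi$-sum (where $\phi(0)\varepsilon_{0}=-\tfrac{1}{2}$), and the error from the $1/l^{j}$-piece equals the $(j_{L},\xi_{L})=(q,0)$ term of the second, completing the full summation ranges $0\leq j\leq p$ and $0\leq j\leq q$ demanded by the statement. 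The main obstacle is the $j=1$ case of the Fourier identity, where the series converges only conditionally; this is handled by the symmetric-partial-sum interpretation already used in \eqref{eq:berfu}, after which all interchanges of summation are justified by the absolute convergence of the outer $m$-series under the hypothesis $s>1$. The even $(p,q)$ specialization is recorded as equation $(7.55)$ of \cite{KMTJC}, and the argument above is its direct extension to arbitrary positive integer exponents.
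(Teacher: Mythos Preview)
Your partial-fraction approach is correct and genuinely different from the paper's proof. The paper does \emph{not} decompose $1/(l^{p}(l+m)^{q})$; instead, for $p\geq 2$ it multiplies the Fourier expansion \eqref{eq-4-4-2} of $\sum_{l\neq 0}(-1)^{l}e^{il\theta}/l^{p}$ by $\sum_{m}(-1)^{m}x^{m}e^{im\theta}/m^{s}$, and then invokes an external integration lemma (\cite[Lemma~6.2]{KMTJC}) to generate the factor $(l+m)^{-q}$; the case $p=1$ is obtained separately by substituting $x\mapsto -xe^{i\theta}$ into the $p=2$ identity and differentiating in $\theta$. Your argument is more self-contained---no black-box lemma, no case split on $p$---at the cost of more index bookkeeping (which, incidentally, checks out: $[\text{error}_{2}]=B_{(p,0)}$ and $[\text{error}_{1}]=-C_{(q,0)}$ combine with the restricted main parts to give exactly $B-C$).

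One point you should tighten. Your route forces you to use the Fourier identity for the inner sum at the exponents $j=1$ and $k=1$, and that identity is \emph{false} at $\theta=\pm\pi$ (e.g.\ at $\theta=\pi$ the symmetric sum $\sum_{l\neq 0}1/l$ vanishes while the polynomial side equals $-i\pi$). Your remark about ``symmetric-partial-sum interpretation'' addresses conditional convergence on the open interval but not this endpoint failure. The fix is easy: carry out your argument for $\theta\in(-\pi,\pi)$, then observe that each of the three blocks in \eqref{eq-5-6} is a continuous function of $\theta$ on $[-\pi,\pi]$ (the double sum because $l^{-p}(l+m)^{-q}=O(l^{-p-q})$ with $p+q\geq 2$ and the $m$-sum converges absolutely for $s>1$; the other two blocks are polynomials in $\theta$ times absolutely convergent Dirichlet series), so the identity extends to the closed interval by continuity. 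The paper's method sidesteps this issue for $p\geq 2$ precisely because it applies the Fourier identity only at the single exponent $p\geq 2$, where it does hold on $[-\pi,\pi]$.
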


\begin{proof}
For $p \in \mathbb{N}$, 
it is known that (see, for example, \cite[(4.31),\,(4.32)]{KMTJC})
\begin{align}
& \lim_{L \to \infty}\sum_{-L\leq l\leq L \atop l\not=0} \frac{(-1)^{l}e^{il\theta}}{l^{p}}=2\sum_{j=0}^{p}\ \phi(p-j)\varepsilon_{p-j}\frac{(i\theta)^{j}}{j!}\quad (\theta \in (-\pi,\pi)). \label{eq-4-4-2} 
\end{align}
Note that the left-hand side is uniformly convergent for $\theta \in (-\pi,\pi)$ (see \cite[$\S$ 3.35]{WW}), and is also absolutely convergent for $\theta \in [-\pi,\pi]$ when $p\geq 2$. 
First we assume $p\geq 2$. Then, for $\theta \in [-\pi,\pi]$, it follows from \eqref{eq-4-4-2} that
\begin{align}
& \left( \sum_{l\in \mathbb{Z} \atop l\not=0} \frac{(-1)^{l}e^{il\theta}}{l^{p}}-2\sum_{j=0}^{p}\ \phi(p-j)\varepsilon_{p-j}\frac{(i\theta)^{j}}{j}\right)\sum_{m=1}^\infty \frac{(-1)^{m}x^m e^{im\theta}}{m^s}=0, \label{eq-4-4-3} 
\end{align}
where the left-hand side is absolutely and uniformly convergent for $\theta \in [-\pi,\pi]$. Therefore we have
\begin{equation}
\begin{split}
& \sum_{l\in \mathbb{Z},\ l\not=0 \atop{m\geq 1 \atop l+m\not=0}} \frac{(-1)^{l+m}x^m e^{i(l+m)\theta}}{l^{p}m^{s}}-2\sum_{j=0}^{p}\ \phi(p-j)\varepsilon_{p-j}\left\{ \sum_{m=1}^\infty \frac{(-1)^{m}x^m e^{im\theta}}{m^s}\right\} \frac{(i\theta)^{j}}{j!}\\
& \ \ \ \ =(-1)^{p+1}\sum_{m=1}^\infty \frac{x^m}{m^{s+p}} 
\end{split}
\label{eq-4-4-4} 
\end{equation}
for $\theta \in [-\pi,\pi]$. Now we apply \cite[Lemma 6.2]{KMTJC} with $d=q\in \mathbb{N}$. Then we obtain \eqref{eq-5-6} for $p\geq 2$. 

Next we prove the case $p=1$. As we proved above, \eqref{eq-5-6} in the case $p=2$ holds. Replacing $x$ by $-xe^{i\theta}$ in this case, we have
\begin{equation}
\begin{split}
& \sum_{l\not=0,\,m\geq 1 \atop l+m\not=0} \frac{(-1)^{l}x^m e^{il\theta}}{l^{2}m^{s}(l+m)^{q}} \\
& \ \ -2\sum_{j=0}^{2}\ \phi(2-j)\varepsilon_{2-j} \sum_{\xi=0}^{j}\binom{q-1+j-\xi}{q-1}(-1)^{j-\xi}\sum_{m=1}^\infty \frac{x^m}{m^{s+q+j-\xi}}\frac{(i\theta)^{\xi}}{\xi!} \\
& \ \ +2\sum_{j=0}^{q}\ \phi(q-j)\varepsilon_{q-j} \sum_{\xi=0}^{j}\binom{1+j-\xi}{1}(-1)^{1}\sum_{m=1}^\infty \frac{(-1)^mx^m e^{-im\theta}}{m^{s+2+j-\xi}}\frac{(i\theta)^{\xi}}{\xi!}=0 
\end{split}
\label{eq-5-62}
\end{equation}
for $\theta \in [-\pi,\pi]$. We denote the first, the second and the third term on the left hand side of \eqref{eq-5-62} by $I_1(\theta)$, $I_2(\theta)$ and $I_3(\theta)$, respectively. 
We differentiate these terms in $\theta$. We can easily compute $I_1'(\theta)$ and $I_2'(\theta)$. As for $I_3'(\theta)$, we have
\begin{align*}
I_3'(\theta)& = 2\sum_{j=0}^q\phi(q-j)\varepsilon_{q-j}\bigg\{-i\sum_{\xi=0}^{j}(1+j-\xi)(-1)\sum_{m=1}^\infty \frac{(-1)^mx^m e^{-im\theta}}{m^{s+1+j-\xi}}\frac{(i\theta)^{\xi}}{\xi!}\\
& \qquad +i\sum_{\xi=1}^{j}(1+j-\xi)(-1)\sum_{m=1}^\infty \frac{(-1)^mx^m e^{-im\theta}}{m^{s+2+j-\xi}}\frac{(i\theta)^{\xi-1}}{(\xi-1)!}\bigg\}.
\end{align*}
Note that as for the second member in the curly brackets on the right-hand side, $\xi$ may also run from $1$ to $j+1$ because $1+j-(j+1)=0$ in the summand. Hence, by replacing $\xi-1$ by $\xi$, we have 
\begin{align*}
I_3'(\theta)& = 2i\sum_{j=0}^q\phi(q-j)\varepsilon_{q-j}\sum_{\xi=0}^{j}\sum_{m=1}^\infty \frac{(-1)^mx^m e^{-im\theta}}{m^{s+1+j-\xi}}\frac{(i\theta)^{\xi}}{\xi!}.
\end{align*}
Thus, we see that $(I_1'(\theta)+I_2'(\theta)+I_3'(\theta))/i$,
replacing $x$ by $-xe^{i\theta}$, 
gives \eqref{eq-5-6} in the case $p=1$. This completes the proof.
\end{proof}

Here we quote the following lemma given in \cite[Lemma 9.1]{KM5}. Note that 
the assertion in \cite[Lemma 9.1]{KM5} is stated only in the case that $p$ is even. However, we can easily check that the assertion holds for any $p\in \mathbb{N}$ as follows. 

\begin{lemma} \label{L-4-3} 
Let $t\in[0,2\pi) \subset \mathbb{R}$, and
$h:\mathbb{N}_{0} \to \mathbb{C}$ be a function (which may depend on $t$). 
Then, for $p \in \mathbb{N}$,
\begin{equation}
\begin{split}
& \sum_{j=0}^{p} \phi(p-j) \varepsilon_{p-j}\sum_{\xi=0}^{j} h(j-\xi)\frac{(i(t-\pi))^{\xi}}{\xi!} =-\frac{1}{2}\sum_{\xi=0}^{p}h(p-\xi)\frac{(2\pi i)^{\xi}}{\xi!}B_{\xi}\left(\left\{ \frac{t}{2\pi}\right\} \right). 
\end{split}
\label{eq-5-6-2}
\end{equation}
\end{lemma}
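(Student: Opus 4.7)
\textbf{Proof plan for Lemma \ref{L-4-3}.}

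Both sides of \eqref{eq-5-6-2} are $\mathbb{C}$-linear in $h$, so my first move is to reduce to the basis functions $h_{k}(m)=\delta_{m,k}$ for $k\in\mathbb{N}_{0}$. Plugging $h=h_{k}$ into the LHS forces $j-\xi=k$; after the substitution $\eta=j-k$ and $\nu=p-k$ this collapses the double sum to $\sum_{\eta=0}^{\nu}\phi(\nu-\eta)\varepsilon_{\nu-\eta}(i(t-\pi))^{\eta}/\eta!$, while on the RHS only the single term $\xi=\nu$ survives, contributing $-\tfrac{1}{2}(2\pi i)^{\nu}B_{\nu}(\{t/(2\pi)\})/\nu!$. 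The cases $k>p$ contribute zero to each side for trivial index reasons. Consequently the lemma reduces to the single-variable identity
\begin{equation*}
\sum_{\eta=0}^{\nu}\phi(\nu-\eta)\varepsilon_{\nu-\eta}\frac{(i(t-\pi))^{\eta}}{\eta!}=-\frac{1}{2}\frac{(2\pi i)^{\nu}}{\nu!}B_{\nu}\Bigl(\Bigl\{\frac{t}{2\pi}\Bigr\}\Bigr),\qquad \nu\in\mathbb{N}_{0}.
\end{equation*}

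To prove this identity I would substitute $\theta=t-\pi$ into \eqref{eq-4-4-2}. For $t\in(0,2\pi)$ we have $\theta\in(-\pi,\pi)$ and $(-1)^{l}e^{il\theta}=e^{ilt}$, so the LHS of \eqref{eq-4-4-2} becomes $\lim_{L\to\infty}\sum_{0<|l|\leq L}e^{ilt}/l^{\nu}$. The classical Fourier expansion of the periodic Bernoulli function, already recorded as \eqref{eq:berfu}, evaluates this same limit at $x=t/(2\pi)\in(0,1)$ to $-(2\pi i)^{\nu}B_{\nu}(\{t/(2\pi)\})/\nu!$. Equating these two expressions for the common Fourier sum and dividing by $2$ yields the target identity on $(0,2\pi)$.

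The boundary case $t=0$ is then handled by continuity of both sides as polynomials in $t$ on $[0,2\pi)$ together with the convention $\{0\}=0$. I would dispatch the small values of $\nu$ not already covered by the stated hypothesis $p\in\mathbb{N}$ in \eqref{eq-4-4-2} by direct verification: for $\nu=0$ both sides equal $-1/2$ using $\phi(0)=-1/2$ and $B_{0}=1$, and for $\nu=1$ both sides equal $-i(t-\pi)/2$ using $\varepsilon_{1}=0$, $\phi(0)=-1/2$ and $B_{1}(x)=x-\tfrac{1}{2}$. The main obstacle is not analytic — the Fourier content is entirely carried by \eqref{eq-4-4-2} and \eqref{eq:berfu} — but combinatorial: verifying that the exchange of summations $j\mapsto(\eta,k)$ produces matching index ranges on the two sides and that all \emph{a priori} extraneous contributions (in particular $k>p$ and the conditionally-convergent endpoint issue for $\nu=1$) vanish correctly.
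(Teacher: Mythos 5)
Your proof is correct, and it takes a genuinely different route from the paper, because the paper in fact gives no proof of Lemma \ref{L-4-3} at all: it simply quotes \cite[Lemma 9.1]{KM5}, where the identity is established for even $p$, and asserts that the extension to arbitrary $p\in\mathbb{N}$ ``can easily be checked.'' Your argument supplies exactly that missing verification, self-contained modulo the two facts the paper itself records, namely \eqref{eq-4-4-2} and \eqref{eq:berfu}. The two structural points that make it work are: (i) linearity in $h$ reduces \eqref{eq-5-6-2} to the one-parameter family of single-variable identities indexed by $\nu=p-k$, which makes the claim manifestly independent of the parity of $p$; and (ii) each such identity is precisely the statement that \eqref{eq-4-4-2} (at $\theta=t-\pi$, using $(-1)^{l}e^{il\theta}=e^{ilt}$) and \eqref{eq:berfu} (at $x=t/(2\pi)$) evaluate the same symmetric limit $\lim_{L\to\infty}\sum_{0<\abs{l}\leq L}e^{ilt}/l^{\nu}$. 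Your caution at the edges is warranted and correctly resolved: \eqref{eq:berfu} with $j=1$ genuinely fails at $x=0$ (the symmetric partial sums converge to $0$, not to $B_{1}(0)=-1/2$), and \eqref{eq-4-4-2} with $p=1$ fails at $\theta=-\pi$; so restricting the Fourier comparison to $t\in(0,2\pi)$ and recovering $t=0$ from the fact that both sides are polynomials in $t$ on $[0,2\pi)$ agreeing on an open interval is necessary rather than pedantic, as are the direct checks of $\nu=0$ (not covered by \eqref{eq-4-4-2}) and $\nu=1$. What the paper's citation buys is brevity; what your argument buys is a complete proof, uniform in the parity of $p$, using only formulas already displayed in this paper.
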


Put $\theta=t-\pi$ $(0\leq t < 2\pi)$ in \eqref{eq-5-6} and multiply by $(-1)^p$ the both sides. Then, using Lemma \ref{L-4-3}, we have the following. Note that this can be derived by a certain transformation of a result of Nakamura \cite[Theorem 3.1]{Na3} when $|x|=1$.

\begin{lemma} \label{L-4-4} 
For $p,q \in \mathbb{N}$, $s,t\in \mathbb{R}$ with $s>1$ and $t\in [0,2\pi)$, and $x\in \mathbb{C}$ with $|x|\leq 1$, 
\begin{equation}
\begin{split}
& \sum_{l,m=1}^\infty \frac{x^{l+m} e^{imt}}{l^{p}m^{q}(l+m)^{s}}+(-1)^p\sum_{l,m=1}^\infty \frac{x^m e^{i(l+m)t}}{l^{p}m^{s}(l+m)^{q}}+(-1)^{q}\sum_{l,m=1}^\infty \frac{x^me^{-ilt}}{l^{q}m^{s}(l+m)^{p}}\\
& =-\sum_{\vv=0}^{p}\binom{p+q-\vv-1}{q-1}(-1)^{\vv}\sum_{m=1}^\infty \frac{x^me^{imt}}{m^{s+p+q-\vv}} \frac{(2\pi i)^{\vv}}{\vv!}B_{\vv}\left(\left\{ \frac{t}{2\pi}\right\}\right) \\
& \quad -\sum_{\vv=0}^{q}\binom{p+q-\vv-1}{p-1}\sum_{m=1}^\infty \frac{x^m}{m^{s+p+q-\vv}} \frac{(2\pi i)^{\vv}}{\vv!}B_{\vv}\left(\left\{ \frac{t}{2\pi}\right\}\right). 
\end{split}
\label{eq-5-7} 
\end{equation}
\end{lemma}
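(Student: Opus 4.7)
The plan is to reduce Lemma~\ref{L-4-4} to Lemma~\ref{Lem-5-1} by substituting $\theta = t - \pi$ into \eqref{eq-5-6} and then multiplying both sides by $(-1)^p$. The key observation driving everything is the identity $(-1)^{l+m} e^{i(l+m)(t-\pi)} = e^{i(l+m)t}$: every alternating sign attached to an exponential in \eqref{eq-5-6} will evaporate, leaving a clean $e^{i\cdot t}$ factor of the type appearing in the target identity \eqref{eq-5-7}.

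First I would split the double sum in the first term of \eqref{eq-5-6} according to the sign of $l$. The $l \ge 1$ piece is immediately one of the desired triple sums. For $l \le -1$, write $l = -l'$ with $l' \ge 1$ and further split the $m$-summation according to whether $m > l'$ or $m < l'$; in the first case set $m = l' + n$, and in the second case set $l' = m + n$. After these substitutions the three resulting iterated sums exactly match the three sums on the left-hand side of \eqref{eq-5-7}. The prefactors come from $l^p = (-1)^p (l')^p$ and $(l+m)^q = (-1)^q n^q$, and after multiplying by the overall $(-1)^p$ they collapse to the coefficients $1$, $(-1)^p$, and $(-1)^q$ required by the target.

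Next I would apply Lemma~\ref{L-4-3} to each of the two single-parameter Bernoulli-like structures on the right-hand side of \eqref{eq-5-6}. For the $\phi(p-j)$-structure choose
\[
 h(k) = (-1)^k \binom{q-1+k}{q-1} \sum_{m=1}^{\infty} \frac{x^m e^{imt}}{m^{s+q+k}},
\]
where the original $(-1)^m e^{im(t-\pi)}$ has again been rewritten as $e^{imt}$. Lemma~\ref{L-4-3} then collapses the $(j,\xi)$-double sum into a single sum over $\xi$ weighted by $B_\xi(\{t/(2\pi)\})$, which after accounting for the $-2$ prefactor and the overall $(-1)^p$ reproduces the first line of the right-hand side of \eqref{eq-5-7}. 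The $\phi(q-j)$-structure is handled in the same way with $h(k) = \binom{p-1+k}{p-1}\sum_{m\ge 1} x^m/m^{s+p+k}$; here there is no $e^{imt}$ factor, and the $(-1)^{p-1}$ already present cancels against the overall $(-1)^p$, so the output is precisely the second line of \eqref{eq-5-7}.

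The only real difficulty is sign bookkeeping. Between the parities of $p$ and $q$, the signs coming out of $l^p$, $(l+m)^q$, and $(-1)^m$, the overall $(-1)^p$, the factor $(-1)^{p-1}$ already sitting inside \eqref{eq-5-6}, and the $-\frac{1}{2}$ produced by each application of Lemma~\ref{L-4-3}, the calculation is sign-sensitive at every step, and a single misstep flips an entire line. No new analytic input is required: absolute convergence of the triple sums is inherited from the hypothesis $s > 1$ in Lemma~\ref{Lem-5-1}, and the uniform behaviour on $\theta \in [-\pi,\pi]$ justifies the specialisation $\theta = t - \pi$ for $t \in [0, 2\pi)$.
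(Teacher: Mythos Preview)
Your proposal is correct and follows exactly the approach the paper itself uses: substitute $\theta = t-\pi$ into \eqref{eq-5-6}, multiply by $(-1)^p$, split the bilateral $l$-sum into the three pieces $l\ge 1$, $-l'<-m$, $-m<-l'<0$, and then invoke Lemma~\ref{L-4-3} twice to convert the $\phi(p-j)$- and $\phi(q-j)$-blocks into Bernoulli sums. The paper states this in one sentence without writing out the splitting or the choices of $h$, so your write-up is simply a fleshed-out version of the same argument (minor quibble: these are double sums, not ``triple sums'').
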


Using these results, we give the proof of Theorem \ref{T-4-1} as follows.

\begin{proof}[Proof of Theorem  \ref{T-4-1}] 
Let $x=e^{-2it}$ and further let $t=2\pi a/3$ $(a=0,1,2)$ on the both sides of \eqref{eq-5-7}. Then, summing up with $a=0,1,2$ and using the fact for $\varrho=e^{2\pi i/3}$ that 
\begin{equation*}
\sum_{a=0}^{2} \varrho^{Na}=
\begin{cases} 
3 & \text{$N \equiv 0\ (\text{mod\ }3)$} \\
0 & \text{$N \not\equiv 0\ (\text{mod\ }3)$}, 
\end{cases}
\end{equation*}
we have
\begin{align*}
& 3\bigg\{\sum_{l,m\geq 1\atop l\equiv m\,({\rm mod\,3})} \frac{1}{l^{p}m^{q}(l+m)^{s}}+(-1)^p\sum_{l,m\geq 1\atop l\equiv m\,({\rm mod\,3})} \frac{1}{l^{p}m^{s}(l+m)^{q}}\\
& \qquad +(-1)^{q}\sum_{l,m\geq 1\atop l\equiv m\,({\rm mod\,3})} \frac{1}{l^{q}m^{s}(l+m)^{p}}\bigg\}\\
& =-\sum_{\vv=0}^{p}\binom{p+q-\vv-1}{q-1}(-1)^{\vv}\sum_{a=0}^{2}\sum_{m=1}^\infty \frac{\varrho^{-ma}}{m^{s+p+q-\vv}} \frac{(2\pi i)^{\vv}}{\vv!}B_{\vv}\left(\left\{ \frac{a}{3}\right\}\right) \\
& \ \ -\sum_{\vv=0}^{q}\binom{p+q-\vv-1}{p-1}\sum_{a=0}^{2}\sum_{m=1}^\infty \frac{\varrho^{ma}}{m^{s+p+q-\vv}} \frac{(2\pi i)^{\vv}}{\vv!}B_{\vv}\left(\left\{ \frac{a}{3}\right\}\right). 
\end{align*}
Noting  \eqref{Lerch-zeta-A2} and using Proposition \ref{Prop-conti},
we complete the proof of Theorem \ref{T-4-1}. 
\end{proof}

Secondly we consider the case of $C_2$ type, namely the zeta-function
\begin{align*}
  \zeta_2({\bf s};PSp(2))&=\zeta_2 ({\bf s},\bs{0};Q;C_2) \\ 
    &=\sum_{m_1,m_2=1\atop m_1 \equiv 1\ (\text{mod}\ 2)}^\infty 
     \frac{1}{m_1^{s_1}m_2^{s_2}(m_1+m_2)^{s_3}(m_1+2m_2)^{s_4}},
\end{align*}
defined by \eqref{PSp-2} with ${\bf y}={\bf 0}$. 
Here, we aim to prove a $C_2$ type analogue of Lemma \ref{L-4-4}. It is noted that 
we already studied the zeta-function of $C_2$ type in \cite[Section 8]{KMTJC} and \cite[Section 9]{KM5}. In fact, 
using the same method as in the proof of \cite[(9.8)]{KM5}, we can obtain
\begin{align}
& \sum_{l\geq 1 \atop {m\geq 1}} \frac{e^{ilt}}{l^{p}m^s(l+m)^{q}(l+2m)^{r}}+\sum_{l\geq 1 \atop {m\geq 1 \atop {l\not=m \atop l\not=2m}}} \frac{e^{-ilt}}{(-l)^{p}m^s(-l+m)^{q}(-l+2m)^{r}} \label{9-8}\\
& \ +\sum_{\xi=0}^{p} \sum_{\omega=0}^{p-\xi} \binom{\omega+r-1}{\omega}\binom{p+q-1-\xi-\omega}{q-1} \frac{(-1)^{p-\xi}}{2^{r+\omega}}\frac{(2\pi i)^\xi}{\xi!}  \notag\\
& \hspace{0.5in} \times \zeta(s+p+q+r-\xi)B_\xi\left({t}/{2\pi}\right)\notag\\
& \ +\sum_{\xi=0}^{q} \sum_{\omega=0}^{q-\xi} \binom{\omega+r-1}{\omega}\binom{p+q-1-\xi-\omega}{p-1} {(-1)^{p-\omega}}\frac{(2\pi i)^\xi}{\xi!}  \notag\\
& \hspace{0.5in} \times \phi(s+p+q+r-\xi,-t/2\pi)B_\xi\left({t}/{2\pi}\right)\notag\\
& \ +\sum_{\xi=0}^{r} \sum_{\omega=0}^{p-1} \binom{\omega+r-\xi}{\omega}\binom{p+q-2-\omega}{q-1} \frac{(-1)^{p-1}}{2^{r-\xi+\omega+1}}\frac{(2\pi i)^\xi}{\xi!}  \notag\\
& \hspace{0.5in} \times \phi(s+p+q+r-\xi,-t/\pi)B_\xi\left({t}/{2\pi}\right)\notag\\
& \ +\sum_{\xi=0}^{r} \sum_{\omega=0}^{q-1} \binom{\omega+r-\xi}{\omega}\binom{p+q-2-\omega}{p-1} {(-1)^{p-\omega}}\frac{(2\pi i)^\xi}{\xi!}  \notag\\
& \hspace{0.5in} \times \phi(s+p+q+r-\xi,-t/\pi)B_\xi\left({t}/{2\pi}\right)=0 \notag
\end{align}
for $p,q,r \in \mathbb{N}$ and $s,t \in \mathbb{R}$ with $s>1$ and $t \in [0,2\pi)$. Actually, this equation with replacing $(p,q,r)$ by $(2p,2q,2p)$ coincides with \cite[(9.8)]{KM5} in the case $(\eta,\rho,\delta,\tau)=(t,0,0,0)$. 
As for the second term on the left-hand side of \eqref{9-8}, 
we split the sum into two parts 
according to the conditions $l<m$ or $l>m$, and transform variables as
$j=m-l(\geq 1)$ when $l<m$, and $j=l-m(\geq 1)$ when $l>m$.   In the latter case
we further split the sum according to $j<m$ or $j>m$ (that is, $l<2m$ or $l>2m$). 
Then we can see that \eqref{9-8} implies the following. 
\begin{align}
& \sum_{l\geq 1 \atop {m\geq 1}} \frac{e^{ilt}}{l^{p}m^s(l+m)^{q}(l+2m)^{r}}+
(-1)^p\sum_{l\geq 1 \atop {m\geq 1}} \frac{e^{-ilt}}{l^{p}m^q(l+m)^{s}(l+2m)^{r}}\label{C2-eq-1}\\
& +(-1)^{p+q}\sum_{l\geq 1 \atop {m\geq 1}} \frac{e^{-i(l+2m)t}}{l^{r}m^q(l+m)^{s}(l+2m)^{p}}+(-1)^{p+q+r}\sum_{l\geq 1 \atop {m\geq 1}} \frac{e^{-i(l+2m)t}}{l^{r}m^s(l+m)^{q}(l+2m)^{p}}\notag\\
& \ +\sum_{\xi=0}^{p} \sum_{\omega=0}^{p-\xi} \binom{\omega+r-1}{\omega}\binom{p+q-1-\xi-\omega}{q-1} \frac{(-1)^{p-\xi}}{2^{r+\omega}}\frac{(2\pi i)^\xi}{\xi!}  \notag\\
& \hspace{0.5in} \times \zeta(s+p+q+r-\xi)B_\xi\left({t}/{2\pi}\right)\notag\\
& \ +\sum_{\xi=0}^{q} \sum_{\omega=0}^{q-\xi} \binom{\omega+r-1}{\omega}\binom{p+q-1-\xi-\omega}{p-1} {(-1)^{p-\omega}}\frac{(2\pi i)^\xi}{\xi!}  \notag\\
& \hspace{0.5in} \times \phi(s+p+q+r-\xi,-t/2\pi)B_\xi\left({t}/{2\pi}\right)\notag\\
& \ +\sum_{\xi=0}^{r} \sum_{\omega=0}^{p-1} \binom{\omega+r-\xi}{\omega}\binom{p+q-2-\omega}{q-1} \frac{(-1)^{p}}{2^{r-\xi+\omega+1}}\frac{(2\pi i)^\xi}{\xi!}  \notag\\
& \hspace{0.5in} \times \phi(s+p+q+r-\xi,-t/\pi)B_\xi\left({t}/{2\pi}\right)\notag\\
& \ +\sum_{\xi=0}^{r} \sum_{\omega=0}^{q-1} \binom{\omega+r-\xi}{\omega}\binom{p+q-2-\omega}{p-1} {(-1)^{p-\omega+1}}\frac{(2\pi i)^\xi}{\xi!}  \notag\\
& \hspace{0.5in} \times \phi(s+p+q+r-\xi,-t/\pi)B_\xi\left({t}/{2\pi}\right)=0. \notag
\end{align}
Denote the left-hand side by $H(t)$, namely \eqref{C2-eq-1} implies $H(t)=0$ for any $t\in [0,2\pi)$. Let $t=0,\pi$. We note that $e^{\pm il\pi}=e^{-i(l+2m)\pi}=(-1)^l$ for $l,m\in \mathbb{N}$. Also we have $\phi(s,0)=\phi(s,-1)=\zeta(s)$, $\phi(s,- 1/2)=\phi(s)=(2^{1-s}-1)\zeta(s)$. Therefore, considering $(H(0)-H(\pi))/2=0$ and noting \eqref{PSp-2}, we have the following result by the meromorphic continuation similarly to Theorem \ref{T-4-1}. 

\begin{theorem} \label{T-PSp-2}
For $p,q,r \in \mathbb{N}$,
\begin{align}
& \zeta_2((p,s,q,r);PSp(2))+(-1)^p\zeta_2((p,q,s,r);PSp(2)) \label{C2-Fq}\\
& \quad +(-1)^{p+q}\zeta_2((r,q,s,p);PSp(2))+(-1)^{p+q+r}\zeta_2((r,s,q,p);PSp(2))\notag\\
& \ +\sum_{\xi=0}^{p} \sum_{\omega=0}^{p-\xi} \binom{\omega+r-1}{\omega}\binom{p+q-1-\xi-\omega}{q-1} \frac{(-1)^{p-\xi}}{2^{r+\omega}}\frac{(2\pi i)^\xi}{\xi!}  \notag\\
& \hspace{0.5in} \times \zeta(s+p+q+r-\xi)\frac{B_\xi(0)-B_\xi\left(1/2\right)}{2}\notag\\
& \ +\sum_{\xi=0}^{q} \sum_{\omega=0}^{q-\xi} \binom{\omega+r-1}{\omega}\binom{p+q-1-\xi-\omega}{p-1} {(-1)^{p-\omega}}\frac{(2\pi i)^\xi}{\xi!}  \notag\\
& \hspace{0.5in} \times \zeta(s+p+q+r-\xi)\frac{B_\xi(0)-\left(2^{1-s-p-q-r+\xi}-1\right)B_\xi\left(1/2\right)}{2}\notag\\
& \ +\sum_{\xi=0}^{r} \sum_{\omega=0}^{p-1} \binom{\omega+r-\xi}{\omega}\binom{p+q-2-\omega}{q-1} \frac{(-1)^{p}}{2^{r-\xi+\omega+1}}\frac{(2\pi i)^\xi}{\xi!}  \notag\\
& \hspace{0.5in} \times \zeta(s+p+q+r-\xi)\frac{B_\xi(0)-B_\xi\left(1/2\right)}{2}\notag\\
& \ +\sum_{\xi=0}^{r} \sum_{\omega=0}^{q-1} \binom{\omega+r-\xi}{\omega}\binom{p+q-2-\omega}{p-1} {(-1)^{p-\omega+1}}\frac{(2\pi i)^\xi}{\xi!}  \notag\\
& \hspace{0.5in} \times \zeta(s+p+q+r-\xi)\frac{B_\xi(0)-B_\xi\left(1/2\right)}{2}=0 \notag
\end{align}
holds for $s \in \mathbb{C}$ except for singularities.
\end{theorem}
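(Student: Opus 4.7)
The plan is to extract the congruence condition $m_1\equiv 1 \pmod 2$ defining $\zeta_2(\bs{s};PSp(2))$ from the identity $H(t)=0$ in \eqref{C2-eq-1} by evaluating at the two special points $t=0$ and $t=\pi$, then applying the combination $(H(0)-H(\pi))/2$. The underlying identity \eqref{9-8} is derived by the same techniques as in the proof of \cite[(9.8)]{KM5}, and \eqref{C2-eq-1} is obtained from it by splitting the second sum (subject to the exclusions $l\ne m$, $l\ne 2m$) into the three ranges $l<m$, $m<l<2m$, $l>2m$, and relabeling via $j=m-l$ (respectively $j=l-m$) in each range; the result is the four-term ``main'' expression with uniform sign pattern $1,\,(-1)^p,\,(-1)^{p+q},\,(-1)^{p+q+r}$ carrying exponentials $e^{ilt},\, e^{-ilt},\, e^{-i(l+2m)t},\, e^{-i(l+2m)t}$ respectively.

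First I would substitute $t=0$ and $t=\pi$ into \eqref{C2-eq-1}. At $t=0$ every exponential equals $1$, while at $t=\pi$ one has $e^{\pm il\pi}=(-1)^l$ and $e^{-i(l+2m)\pi}=(-1)^l$, since $2m$ is even. Consequently each of the four main series acquires a factor of $(-1)^l$ at $t=\pi$, and $(H(0)-H(\pi))/2$ extracts the parity indicator $(1-(-1)^l)/2=[l\text{ odd}]$. Upon identifying $l=m_1$ and $m=m_2$, these four series become precisely the four $\zeta_2(\,\cdot\,;PSp(2))$ terms on the first two lines of \eqref{C2-Fq}.

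Next I would evaluate the five ``auxiliary'' blocks, which involve Bernoulli polynomial factors $B_\xi(\{t/2\pi\})$ and Lerch zeta factors. The Bernoulli factor contributes $B_\xi(0)$ at $t=0$ and $B_\xi(1/2)$ at $t=\pi$, producing the combination $(B_\xi(0)-B_\xi(1/2))/2$. The Lerch factor $\phi(s,-t/2\pi)$ evaluates to $\phi(s,0)=\zeta(s)$ at $t=0$ but to $\phi(s,-1/2)=(2^{1-s}-1)\zeta(s)$ at $t=\pi$; this is the source of the asymmetric coefficient $(2^{1-s-p-q-r+\xi}-1)$ in the second block of \eqref{C2-Fq}. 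The other Lerch factor $\phi(s,-t/\pi)$ equals $\zeta(s)$ at both endpoints (namely $\phi(s,0)$ and $\phi(s,-1)$), so the third and fourth blocks reduce to the symmetric combination $\zeta(\cdots)\cdot(B_\xi(0)-B_\xi(1/2))/2$; the first block likewise, since its $\zeta(\cdots)$ factor is $t$-independent.

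Assembling these evaluations into $(H(0)-H(\pi))/2=0$ yields exactly \eqref{C2-Fq} for $\mathrm{Re}(s)$ sufficiently large, and Proposition \ref{Prop-conti} extends the identity meromorphically to all $s\in\mathbb{C}$ except for singularities, just as in the proof of Theorem \ref{T-4-1}. I expect the main obstacle to be purely bookkeeping: correctly tracking the several sign factors $(-1)^{p-\xi}$, $(-1)^{p-\omega}$, $(-1)^{p-\omega+1}$ that appear in \eqref{C2-eq-1} through the region-splitting, and ensuring that the asymmetric $(2^{1-s}-1)$ contribution is localized in the block arising from $\phi(s,-t/2\pi)$ and does not leak into the blocks coming from $\phi(s,-t/\pi)$.
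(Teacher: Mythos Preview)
Your proposal is correct and follows essentially the same route as the paper: derive \eqref{C2-eq-1} from \eqref{9-8} by the three-range splitting you describe, set $t=0,\pi$, use $e^{\pm il\pi}=e^{-i(l+2m)\pi}=(-1)^l$ and the evaluations $\phi(s,0)=\phi(s,-1)=\zeta(s)$, $\phi(s,-1/2)=(2^{1-s}-1)\zeta(s)$, then form $(H(0)-H(\pi))/2$ and extend by meromorphic continuation. The paper's proof is exactly this, with the same observations about which Lerch factor produces the asymmetric $(2^{1-s-p-q-r+\xi}-1)$ coefficient.
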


\begin{example} \label{Ex-C_2}
By Theorem \ref{T-PSp-2}, we can evaluate $\zeta_2((a,b,c,d);PSp(2))$ in some case when $a+b+c+d$ is odd. For example, setting $(p,s,q,r)=(2,1,1,1)$ and $(2,3,3,5)$ in \eqref{C2-Fq}, we have
\begin{align*}
& \zeta_2((2,1,1,1);PSp(2))=\frac{3}{8}\zeta(2)\zeta(3)-\frac{31}{64}\zeta(5),\\
& \zeta_2((2,3,3,5);PSp(2))= -\frac{15}{16}\zeta(4)\zeta(9) - \frac{17379}{4096}\zeta(2)\zeta(11) + \frac{8191}{1024}\zeta(13).
\end{align*}
In general, it seems to be difficult to evaluate $\zeta_2((a,b,c,d);PSp(2))$ for arbitrary $a,b,c,d\in \mathbb{N}$. We will further consider this problem in the next section. 
\end{example}

\begin{remark} \label{Rem-C2-1}
Putting $(p,s,q,r)=(2k,2l,2l,2k)$ $(k,l\in \mathbb{N})$ in \eqref{C2-Fq}, we can see 
that
$$\zeta_2 ((2k,2l,2l,2k);PSp(2))$$
can be expressed as a polynomial in $\zeta(4k+4l-\xi)(i\pi)^\xi$ with 
$\mathbb{Q}$-coefficients. Since $\zeta_2 ((2k,2l,2l,2k);PSp(2))\in \mathbb{R}$, we 
see that the part consisting of the terms of $\zeta(4k+4l-\xi)(i\pi)^\xi$ for odd $\xi$ vanish. 
On the other hand, for even $\xi$, each term belongs to $\mathbb{Q}\cdot \pi^{4(k+l)}$. Thus we recover \eqref{W-Z-C2}.
\end{remark}

\ 

\section{Parity results}\label{sec-6}

Euler proved that the double zeta value (of weight $p+q$)
$$\zeta_2(p,q)=\sum_{m\geq 1 \atop n\geq 1} \frac{1}{m^{p}(m+n)^q}\qquad (p,q \in \mathbb{N};\,q\geq 2)$$
can be expressed as a polynomial in $\{\zeta(j+1)\,|\,j\in \mathbb{N}\}$ with $\mathbb{Q}$-coefficients (cf. \cite{Kaneko}) if its weight $p+q$ is odd. This property is often called the \textit{parity result}, and has been generalized to that for multiple zeta-values (see \cite{IKZ,TsActa}). 

It is an interesting problem to ask what kind of multiple zeta values has this type
of properties.   Tornheim (see \cite[Theorem 7]{To}) proved that 
$\zeta_2((a,b,c),{\bf 0};SU(3))=\zeta_2((a,b,c),{\bf 0};P,A_2)$ has this property, 
that is, it can be expressed as a polynomial in $\{ \zeta(j+1)\,|\,j\in \mathbb{N}\}$ 
with $\mathbb{Q}$-coefficients if its weight $a+b+c$ is odd. 
This result has been generalized by the third-named author \cite{TsPAMS} to the case 
of multiple Mordell-Tornheim zeta values.
Also the third-named author (see \cite{TsArch}) proved that $\zeta_2((a,b,c,d),{\bf 0};Sp(2))=\zeta_2((b,a,c,d),{\bf {0}};Spin(5))$ has this property, which is an extension of the result of Apostol and Vu \cite{ApVu}. 

In this section we first prove the following fact, which is a $PU(3)$ type analogue 
of Tornheim's result stated above. 

\begin{theorem} \label{Th-6-2}
Let $a,b,c \in \mathbb{N}$. If $a+b+c$ is odd then 
$\zeta_2 ((a,b,c);PU(3))$ can be expressed as a polynomial in $\{ \phi(j;a/3)\,|\,a\in\{0,1,2\},\ j\in \mathbb{N}\}$ with $\mathbb{Q}[\pi, i]$-coefficients.
\end{theorem}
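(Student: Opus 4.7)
The plan is to apply Theorem \ref{T-4-1} three times, specialized to different permutations of the triple $(a,b,c)$, and combine the resulting linear relations using the symmetry
$$\zeta_2((s_1,s_2,s_3);PU(3))=\zeta_2((s_2,s_1,s_3);PU(3)),$$
which is immediate from interchanging $m$ and $n$ in \eqref{Lerch-zeta-A2}. Under this symmetry the six orderings of $(a,b,c)$ collapse to the three unknowns
$$Z_1=\zeta_2((a,b,c);PU(3)),\quad Z_2=\zeta_2((a,c,b);PU(3)),\quad Z_3=\zeta_2((b,c,a);PU(3)).$$
Specializing $(p,q,s)$ in Theorem \ref{T-4-1} to $(a,b,c)$, $(a,c,b)$, and $(b,c,a)$ in turn, and rewriting every resulting zeta-term as one of $Z_1,Z_2,Z_3$ via the symmetry, yields the linear system
$$M\begin{pmatrix}Z_1\\Z_2\\Z_3\end{pmatrix}=\frac{1}{3}\begin{pmatrix}R(a,b,c)\\R(a,c,b)\\R(b,c,a)\end{pmatrix},\qquad M=\begin{pmatrix}1&\varepsilon_1&\varepsilon_2\\\varepsilon_1&1&\varepsilon_3\\\varepsilon_2&\varepsilon_3&1\end{pmatrix},$$
where $\varepsilon_1=(-1)^a$, $\varepsilon_2=(-1)^b$, $\varepsilon_3=(-1)^c$, and $R(p,q,s)$ denotes the right-hand side of \eqref{T-4-1-eq}.

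A direct expansion gives
$$\det M=-2+2\varepsilon_1\varepsilon_2\varepsilon_3=-2+2(-1)^{a+b+c}.$$
The parity hypothesis $a+b+c\equiv 1\pmod 2$ forces $\det M=-4\neq 0$, so Cramer's rule expresses each $Z_i$, and in particular $\zeta_2((a,b,c);PU(3))$, as a $\mathbb{Q}$-linear combination of $R(a,b,c)$, $R(a,c,b)$, $R(b,c,a)$. Inspecting \eqref{T-4-1-eq} shows that each $R(p,q,s)$ is a $\mathbb{Q}[\pi,i]$-linear combination of the Lerch values $\phi(j,a/3)$ with $a\in\{0,1,2\}$ and $j\in\mathbb{N}$, because the coefficients $B_\nu(a/3)$ are all rational. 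Composing these two steps gives the desired representation of $\zeta_2((a,b,c);PU(3))$.

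The main obstacle is the invertibility of $M$, which is precisely where the parity hypothesis is used: in the even-weight case $a+b+c\equiv 0\pmod 2$ one has $\det M=0$, and the three functional equations alone cannot separate $Z_1,Z_2,Z_3$. This obstruction is consistent with the fact, established in Theorem \ref{thm:W-Z}, that even-weight values sit in $\mathbb{Q}\cdot\pi^{a+b+c}$ and not in the $\phi$-algebra in the same uniform way. A minor technical care is needed to confirm that the specialized values of $s$ (namely $s=c,b,a$) avoid the singular locus of both sides of \eqref{T-4-1-eq}, but this is automatic since $p,q,s\geq 1$ makes every $\phi(s+p+q-\nu,\pm a/3)$ appearing in \eqref{T-4-1-eq} finite.
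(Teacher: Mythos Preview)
Your proof is correct and rests on the same ingredients as the paper's: repeated application of Theorem~\ref{T-4-1} together with the symmetry $\zeta_2((s_1,s_2,s_3);PU(3))=\zeta_2((s_2,s_1,s_3);PU(3))$. The paper, however, organizes the argument case-by-case according to the parity pattern of $(a,b,c)$: for instance, when $a$ is odd and $b,c$ are even it specializes $(p,q,s)$ to $(c,a,b)$ and $(c,b,a)$, subtracts the two resulting relations, and uses the symmetry to collapse directly to $2\zeta_2((a,b,c);PU(3))\in\mathfrak{X}$, declaring the remaining parity patterns ``similar''. Your $3\times 3$ linear system is a uniform packaging of the same idea: it handles all parity distributions simultaneously and makes the role of the hypothesis transparent through $\det M=-2+2(-1)^{a+b+c}$. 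Neither argument is materially deeper than the other, but yours avoids the implicit case analysis.
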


\begin{proof}
Denote by $\mathfrak{X}$ the set of polynomials in $\{ \phi(j;a/3)\,|\,a\in\{0,1,2\},\ j\in \mathbb{N}\}$ with $\mathbb{Q}[\pi, i]$-coefficients. 
Then we see that the right-hand side of \eqref{T-4-1-eq} with $(p,q,s)=(c,a,b)$ is in $\mathfrak{X}$. 
First we consider the case $a$ is odd and $b,c$ is even, namely $a+b+c$ is odd.
Then, by \eqref{T-4-1-eq}, we have 
\begin{align*}
& \zeta_2 ((c,a,b);PU(3)) +\zeta_2 ((c,b,a);PU(3)) - \zeta_2((a,b,c);PU(3))\in \mathfrak{X}.
\end{align*}
Also, setting $(p,q,s)=(c,b,a)$ in \eqref{T-4-1-eq}, we have
\begin{align*}
& \zeta_2 ((c,a,b);PU(3)) +\zeta_2 ((c,b,a);PU(3)) + \zeta_2 ((b,a,c);PU(3))\in \mathfrak{X}.
\end{align*}
Note that $\zeta_2 ((p,q,r);PU(3))=\zeta_2 ((q,p,r);PU(3))$. Hence, these imply the assertion $\zeta_2((a,b,c);PU(3))\in \mathfrak{X}$. 
As for other cases, we can similarly prove their assertions. 
\end{proof}

\begin{example} \label{Exam-C2-3}
Setting $(p,q,s)=(1,3,5)$ and $(1,5,3)$ in \eqref{T-4-1-eq}, we have 
\begin{align*}
& \zeta_2 ((1,3,5);PU(3)) -\zeta_2 ((1,5,3);PU(3)) - \zeta_2((3,5,1);PU(3))\\
& \quad=-\frac{4}{3}\zeta(9) + \frac{\pi^2}{9}\zeta(7)-\frac{4}{3}(\phi(9,1/3)+\phi(9,2/3)) +\frac{2\pi i}{9}(\phi(8,1/3)-\phi(8,2/3))\\
& \qquad - \frac{\pi^2}{27}(\phi(7,1/3)+\phi(7,2/3)) + \frac{4\pi^3 i}{243}(\phi(6,1/3)-\phi(6,2/3)),\\
& \zeta_2 ((1,5,3);PU(3)) -\zeta_2 ((1,3,5);PU(3)) - \zeta_2((5,3,1);PU(3))\\
& \quad=-2\zeta(9) + \frac{\pi^2}{9}\zeta(7)+\frac{\pi^4}{135}\zeta(5)-2(\phi(9,1/3)+\phi(9,2/3)) \\
& \qquad + \frac{2\pi i}{9}(\phi(8,1/3)-\phi(8,2/3)) - \frac{\pi^2}{27}(\phi(7,1/3)+\phi(7,2/3)) \\
& \qquad + \frac{4\pi^3 i}{243}(\phi(6,1/3)-\phi(6,2/3))-\frac{13\pi^4}{3645}(\phi(5,1/3)+ \phi(5,2/3))\\
& \qquad +\frac{4\pi^5 i}{2187}(\phi(4,1/3)-\phi(4,2/3)).
\end{align*}
Combining these results and noting $\zeta_2((3,5,1);PU(3))=\zeta_2((5,3,1);PU(3))$, we have 
\begin{align*}
\zeta_2((3,5,1);PU(3))& =\frac{5}{3}\zeta(9) - \frac{\pi^2}{9}\zeta(7)-\frac{\pi^4}{270}\zeta(5)\\
& +\frac{5}{3}(\phi(9,1/3)+\phi(9,2/3)) - \frac{2\pi i}{9}(\phi(8,1/3)-\phi(8,2/3))\\
& + \frac{\pi^2}{27}(\phi(7,1/3)+\phi(7,2/3)) - \frac{4\pi^3 i}{243}(\phi(6,1/3)-\phi(6,2/3))\\
& +\frac{13\pi^4}{7290}(\phi(5,1/3)+\phi(5,2/3))-\frac{2\pi^5 i}{2187}(\phi(4,1/3)-\phi(4,2/3)).
\end{align*}
Moreover, since
$$\phi(s,1/3)-\phi(s,2/3)=\sqrt{3}i L(s,\chi_3),\quad \phi(s,1/3)+\phi(s,2/3)=(3^{1-s}-1)\zeta(s)$$
as noted in Example \ref{Exam-4-4}, we find that $\zeta_2((3,5,1);PU(3))$ can actually
be written in terms of Riemann-zeta values and values of the Dirichlet $L$-function
attached to $\chi_3$.
\end{example}

Next consider the $C_2$ case.   Since we have already known the parity result for
$\zeta_2 ((a,b,c,d);Sp(2))$ (\cite{TsArch}), it should be not surprising to know
that the following parity result for $\zeta_2 ((a,b,c,d);PSp(2))$ holds.

\begin{theorem} \label{Th-6-3}
Let $a,b,c,d \in \mathbb{N}$. If $a+b+c+d$ is odd then 
$\zeta_2 ((a,b,c,d);PSp(2))$ can be expressed as a polynomial in $\{\zeta(j+1)\,|\,j\in \mathbb{N}\}$ with $\mathbb{Q}$-coefficients.
\end{theorem}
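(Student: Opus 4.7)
The plan is to reduce the parity claim for $\zeta_2((a,b,c,d); PSp(2))$ to Tsumura's parity result \cite{TsArch} for the simply-connected case $\zeta_2((\cdot); Sp(2))$, by expressing the former as an explicit $\mathbb{Q}$-linear combination of two of the latter. The key observation is that since the only difference between the $Sp(2)$ and $PSp(2)$ defining sums is the constraint $m_1\equiv 1\pmod 2$, splitting the $Sp(2)$-sum by the parity of $m_1$ should relate the two families directly.

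I would start from
\begin{equation*}
\zeta_2((s_1,s_2,s_3,s_4); Sp(2))=\sum_{m_1,m_2\geq 1}\frac{1}{m_1^{s_1}m_2^{s_2}(m_1+m_2)^{s_3}(m_1+2m_2)^{s_4}}
\end{equation*}
and split according to the parity of $m_1$. The odd-$m_1$ part is by definition $\zeta_2((s_1,s_2,s_3,s_4); PSp(2))$. In the even-$m_1$ part, substituting $m_1=2m_1'$ and exploiting the identity $m_1+2m_2=2(m_1'+m_2)$ extracts a factor $2^{-(s_1+s_4)}$, leaving $\sum_{m_1',m_2\geq 1} m_1'^{-s_1}m_2^{-s_2}(2m_1'+m_2)^{-s_3}(m_1'+m_2)^{-s_4}$. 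Renaming the dummy variables as $(m_1',m_2)=(N,M)$, this is recognized as $\zeta_2((s_2,s_1,s_4,s_3); Sp(2))$. Combining the two pieces yields
\begin{equation*}
\zeta_2((s_1,s_2,s_3,s_4); PSp(2)) = \zeta_2((s_1,s_2,s_3,s_4); Sp(2)) - \frac{1}{2^{s_1+s_4}}\zeta_2((s_2,s_1,s_4,s_3); Sp(2)),
\end{equation*}
valid initially where both sides converge absolutely and then on all of $\mathbb{C}^4$ by the meromorphic continuation of Proposition \ref{Prop-conti}.

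To conclude, when $a+b+c+d$ is odd, both $Sp(2)$-values on the right-hand side have the same odd weight $a+b+c+d$, so Tsumura's parity result \cite{TsArch} expresses each as a $\mathbb{Q}$-polynomial in $\{\zeta(j+1):j\in\mathbb{N}\}$. Since $2^{-(a+d)}\in\mathbb{Q}$, the same holds for their difference, completing the proof of Theorem \ref{Th-6-3}.

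The only step requiring care is the variable swap identifying the even-$m_1$ contribution with a permuted $Sp(2)$-value; this hinges on the algebraic coincidence $m_1+2m_2=2(m_1'+m_2)$, which is a reflection of the $B_2\simeq C_2$ self-duality. Attempting the analogous reduction for $PSp(r)$ with $r\geq 3$ would be the genuine obstacle, since the even-$m_1$ sub-sum would no longer collapse to a single permuted $Sp(r)$-value.
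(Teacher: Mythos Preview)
Your proof is correct, and the identity
\[
\zeta_2((s_1,s_2,s_3,s_4);PSp(2))=\zeta_2((s_1,s_2,s_3,s_4);Sp(2))-2^{-(s_1+s_4)}\zeta_2((s_2,s_1,s_4,s_3);Sp(2))
\]
is checked exactly as you describe: with $m_1=2m_1'$ one has $m_1+2m_2=2(m_1'+m_2)$ and $m_1+m_2=2m_1'+m_2$, so the even-$m_1$ sub-sum is the $Sp(2)$-sum with $(m_1,m_2)$ replaced by $(m_2,m_1')$ and the exponents permuted to $(s_2,s_1,s_4,s_3)$. Invoking the parity result of \cite{TsArch} for $Sp(2)$ at both permutations then finishes the argument.

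This is a genuinely different route from the paper. The paper does not reduce to the $Sp(2)$ case; instead it applies the partial fraction identity \eqref{PFD} with $(X,Y)=(2l+1+m,m)$ to break up the factor $(2l+1+m)^{-c}(2l+1+2m)^{-d}$, thereby writing $\zeta_2((a,b,c,d);PSp(2))$ as a $\mathbb{Q}$-linear combination of the Tornheim-type sums $\mathfrak{T}_{1,1}$ and $\mathfrak{T}_{1,2}$, and then appeals to the parity result for those from \cite{TsRama} (Lemma~\ref{L-rama}). Your argument is shorter and more conceptual, resting on the $B_2\simeq C_2$ self-duality you point out, and it needs only the single input \cite{TsArch} already cited in the paper. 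The paper's approach, on the other hand, yields the explicit decomposition \eqref{T-C2}, which is then used directly in Example~\ref{Exam-LL} to evaluate $\zeta_2((1,2,2,2);PSp(2))$; of course your identity would serve equally well for such computations once the corresponding $Sp(2)$-values are known.
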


In this case, we cannot directly obtain the assertion from Theorem \ref{T-PSp-2} unlike the case of $PU(3)$. In fact, even if we use \eqref{C2-Fq}, it seems unable to obtain an expression of $\zeta_2((1,2,2,2);PSp(2))$ because this value vanishes if we set $(p,s,q,r)=(1,2,2,2)$ or $(2,2,2,1)$ in \eqref{C2-Fq}.
Hence we use another method as follows. First we quote the following.

\begin{lemma}[\cite{TsRama},\,Theorem 4.1] \label{L-rama}
Let 
\begin{align}
\mathcal{T}_{\vv,\mu}(k,l,d)&=\sum_{l\geq 0 \atop m\geq 0}\frac{1}{(2l+\vv)^a (2m+\mu)^b (2l+2m+\vv+\mu)^c} \label{T-zeta}
\end{align}
for $k,l,d\in \mathbb{N}$ and $\vv,\mu\in \{1,2\}$. Suppose $k+l+d$ is odd, then $\mathcal{T}_{\vv,\mu}(k,l,d)$ can be expressed as a polynomial in $\{ \zeta(j+1)\,|\,j\in \mathbb{N}\}$ with $\mathbb{Q}$-coefficients.
\end{lemma}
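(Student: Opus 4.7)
The plan is to reduce Lemma~\ref{L-rama} to a parity statement for Mordell--Tornheim double sums twisted by the sign characters $(-1)^m, (-1)^n$, and then to prove that refined parity statement by a contour/Fourier argument modeled on the proof of Theorem~\ref{T-PSp-2}.

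First I would rewrite the constraint $m \equiv \vv \pmod 2$ via the indicator $(1+(-1)^{m+\vv})/2$, and similarly for $n$. Expanding the product and regarding $k,l,d$ as the three exponents in the original series, this gives
\begin{equation*}
  \mathcal{T}_{\vv,\mu}(k,l,d)=\frac{1}{4}\sum_{\epsilon_1,\epsilon_2\in\{0,1\}}(-1)^{\epsilon_1\vv+\epsilon_2\mu}
  \zeta_{T}^{(\epsilon_1,\epsilon_2)}(k,l,d),
\end{equation*}
where $\zeta_T^{(\epsilon_1,\epsilon_2)}(k,l,d)=\sum_{m,n\geq 1}(-1)^{\epsilon_1 m+\epsilon_2 n}/(m^k n^l (m+n)^d)$. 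The case $(\epsilon_1,\epsilon_2)=(0,0)$ is Tornheim's classical sum, for which the parity result is known (\cite{To}). It therefore suffices to establish the parity result for each alternating sum $\zeta_T^{(\epsilon_1,\epsilon_2)}$ with $(\epsilon_1,\epsilon_2)\neq(0,0)$, i.e., that each such sum is a polynomial over $\mathbb{Q}$ in $\{\zeta(j+1)\}_{j\in\mathbb{N}}$ whenever $k+l+d$ is odd.

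To prove the alternating version, I would specialize Lemma~\ref{L-4-4}. That lemma provides, for any $x$ with $|x|\le 1$ and any $t\in[0,2\pi)$, a relation of the form
\begin{equation*}
  \sum_{l,m\geq 1}\frac{x^{l+m}e^{imt}}{l^{p}m^{q}(l+m)^{s}}+(-1)^p\sum_{l,m\geq 1}\frac{x^m e^{i(l+m)t}}{l^{p}m^{s}(l+m)^{q}}+(-1)^q\sum_{l,m\geq 1}\frac{x^m e^{-ilt}}{l^{q}m^{s}(l+m)^{p}}=(\text{Bernoulli}\times\text{Lerch}),
\end{equation*}
specialized in turn at the four $(x,t)$-pairs $(\pm 1,0)$ and $(\pm 1,\pi)$ (equivalently, to values of $\phi(s,\alpha)$ with $\alpha\in\{0,1/2\}$). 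Since $\phi(s,0)=\zeta(s)$ and $\phi(s,1/2)=(2^{1-s}-1)\zeta(s)$, all the right-hand sides become explicit $\mathbb{Q}[\pi i]$-linear combinations of Riemann zeta values. The three terms on the left are permutations of the same alternating Mordell--Tornheim sum with $(p,q,s)$ placed in the three possible positions. Writing down this functional equation for each of the three cyclic placements of $(k,l,d)$ gives a $3\times 3$ linear system in $\zeta_T^{(\epsilon_1,\epsilon_2)}(k,l,d)$, $\zeta_T^{(\epsilon_1,\epsilon_2)}(l,d,k)$, $\zeta_T^{(\epsilon_1,\epsilon_2)}(d,k,l)$ with constant term living in $\mathbb{Q}[\pi i]\cdot\{\zeta(j)\}$.

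The heart of the argument is then a parity check. When $k+l+d$ is odd the coefficient matrix of that $3\times 3$ system has the signs $+,\pm 1,\pm 1$ arranged so that its determinant is nonzero, which lets me solve for each individual $\zeta_T^{(\epsilon_1,\epsilon_2)}$. Finally, reality of the left-hand side forces the imaginary parts of the $\mathbb{Q}[\pi i]$-polynomial to cancel, and inspection of the powers of $\pi$ shows that only even powers can survive, reducing the right-hand side to a $\mathbb{Q}$-polynomial in the $\zeta(j+1)$. Substituting back into the expression for $\mathcal{T}_{\vv,\mu}(k,l,d)$ yields the lemma. The main obstacle will be the bookkeeping of signs $(-1)^p,(-1)^q$ together with the Bernoulli values $B_\xi(0),B_\xi(1/2)$ to verify the $3\times 3$ determinant is nonvanishing precisely in the odd-weight case, and to confirm that the $\pi$-parity cancellation yields a polynomial purely in $\zeta$-values rather than in $L(s,\chi)$ for a nontrivial character.
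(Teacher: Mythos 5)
Your opening reduction is sound: writing the congruence conditions via the indicators $(1+(-1)^{m+\vv})/2$ and $(1+(-1)^{n+\mu})/2$ does express $\mathcal{T}_{\vv,\mu}(k,l,d)$ as a $\mathbb{Q}$-linear combination of the four alternating Mordell--Tornheim sums $\zeta_T^{(\epsilon_1,\epsilon_2)}(k,l,d)$, the untwisted one being covered by Tornheim's parity theorem; and the parity property for the twisted ones is indeed true (it is the content of the third author's earlier work cited as \cite{TPAMS,TMCOMP,TsRama} and of \cite{Na3}). Bear in mind, though, that the paper does not prove Lemma~\ref{L-rama} internally: it imports it as \cite[Theorem 4.1]{TsRama}, and the only argument supplied is that the hypothesis $d\geq 2$ there can be dropped because \cite[Theorem 3.4]{TsRama} survives for $d=1$ once an empty sum is read as $0$. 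So a blind proof has to carry real weight, and yours breaks at its crucial step.

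The gap is structural. Specialize Lemma~\ref{L-4-4} at $x=\pm1$, $e^{it}=\pm1$ and track the sign attached to each summation index in \eqref{eq-5-7}: in the first sum the signs on $(l,m)$ are $(x,\,xe^{it})$, in the second they are $(e^{it},\,xe^{it})$, and in the third $(e^{it},\,x)$. Except for the trivial choice $x=1$, $t=0$, the three sums therefore never carry the same twist; for instance $(x,e^{it})=(1,-1)$ produces the three distinct characters $(1,-1)$, $(-1,-1)$, $(-1,1)$. Consequently there is no ``$3\times 3$ system in $\zeta_T^{(\epsilon_1,\epsilon_2)}$ with $(k,l,d)$ cyclically permuted'' for a fixed twist: the four specializations couple all four twisted families together with all placements of the exponents, a mixed system of roughly a dozen unknowns, and the nonvanishing of its relevant minors exactly when $k+l+d$ is odd --- which is the entire difficulty --- is asserted by you without any computation. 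The paper itself shows this worry is not hypothetical: just before Lemma~\ref{L-rama} the authors note that the analogous functional relation \eqref{C2-Fq} cannot produce $\zeta_2((1,2,2,2);PSp(2))$ because the coefficient of that value vanishes at an odd-weight point, and this degeneracy is precisely why they fall back on Tsumura's lemma combined with the partial-fraction identity \eqref{PFD} rather than on linear algebra applied to functional relations. To repair your argument you would need either an explicit rank verification for the full mixed system, valid for every odd-weight exponent pattern (including degenerate ones such as repeated or unit exponents), or, more realistically, to replace the linear-system step by the recursive partial-fraction reduction on which the proof in \cite{TsRama} actually rests.
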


It should be noted that the assertion in \cite[Theorem 4.1]{TsRama} includes a condition $d\geq 2$. However, by examining its proof, we can remove this condition. More precisely, we know that \cite[Theorem 4.1]{TsRama} can be derived from \cite[Theorem 3.4]{TsRama} which includes a condition $d\geq 2$. We can easily check that \cite[Theorem 3.4]{TsRama} holds for $d=1$ if we interpret the empty sum as $0$ in its statement. Thus \cite[Theorem 4.1]{TsRama} holds for $d=1$ which implies the above lemma. By this lemma we can prove Theorem \ref{Th-6-3} as follows. 

\begin{proof}[Proof of Theorem \ref{Th-6-3}] 
First we use the relation
\begin{align}
\frac{(-1)^c}{X^c(X+Y)^d}& =\sum_{j=1}^{c}\binom{c+d-j-1}{c-j}(-1)^j\frac{1}{Y^{c+d-j}X^j} \label{PFD}\\
& \quad +\sum_{j=1}^{d}\binom{c+d-j-1}{d-j}\frac{1}{Y^{c+d-j}(X+Y)^j}\notag
\end{align}
for $c,d\in \mathbb{N}$, which can be elementarily proved by induction on $c+d$ by using the partial fraction decomposition repeatedly. Therefore, setting $(X,Y)=(2l+1+m,m)$ in \eqref{PFD}, we see that 
\begin{align}
 & (-1)^c\zeta_2(a,b,c,d;PSp(2))= \sum_{l\geq 0 \atop m\geq 1} \frac{1}{(2l+1)^{a}m^{b}(2l+1+m)^{c}(2l+1+2m)^d}\label{T-C2}\\
& \ = \sum_{j=1}^{c}\binom{c+d-j-1}{c-j}(-1)^j \sum_{l\geq 0 \atop m\geq 1} \frac{1}{(2l+1)^{a}m^{b+c+d-j}(2l+1+m)^{j}}\notag\\
& \quad + \sum_{j=1}^{d}\binom{c+d-j-1}{d-j} \sum_{l\geq 0 \atop m\geq 1} \frac{1}{(2l+1)^{a}m^{b+c+d-j}(2l+1+2m)^{j}}\notag\\
& \ =\sum_{j=1}^{c}\binom{c+d-j-1}{c-j}(-1)^j \left\{\mathfrak{T}_{1,1}(a,b+c+d-j,j)+\mathfrak{T}_{1,2}(a,b+c+d-j,j)\right\}\notag\\
& \quad +\sum_{j=1}^{d}\binom{c+d-j-1}{d-j} 2^{b+c+d-j}\mathfrak{T}_{1,2}(a,b+c+d-j,j).\notag
\end{align}
Hence, by Lemma \ref{L-rama}, we obtain the assertion.
\end{proof}

\begin{example} \label{Exam-LL}
As we noted above, it seems to be unable to obtain an expression of $\zeta_2((1,2,2,2);PSp(2))$ in terms of $\zeta(s)$, from \eqref{C2-Fq}. Hence we use \eqref{T-C2}. Then we have 
\begin{align}
\zeta_2((1,2,2,2);PSp(2)) &= -2\mathfrak{T}_{1,1}(1,5,1)+62\mathfrak{T}_{1,2}(1,5,1) \label{ps1222}\\
& \quad+\mathfrak{T}_{1,1}(1,4,2)+17\mathfrak{T}_{1,2}(1,4,2).\notag
\end{align}
By the method used in \cite[Section 4]{TsRama}, we can obtain 
\begin{align*}
\mathfrak{T}_{1,1}(1,5,1)&=-\frac{105}{128}\zeta(3)\zeta(4)-\frac{93}{128}\zeta(5)\zeta(2)+\frac{381}{128}\zeta(7),\\
\mathfrak{T}_{1,2}(1,5,1)&=-\frac{7}{128}\zeta(3)\zeta(4)-\frac{31}{128}\zeta(5)\zeta(2)+\frac{127}{256}\zeta(7),\\
\mathfrak{T}_{1,1}(1,4,2)&=\frac{105}{128}\zeta(3)\zeta(4)+\frac{279}{128}\zeta(5)\zeta(2)-\frac{1143}{256}\zeta(7),\\
\mathfrak{T}_{1,1}(1,4,2)&=\frac{7}{128}\zeta(3)\zeta(4)+\frac{183}{128}\zeta(5)\zeta(2)-\frac{635}{256}\zeta(7).
\end{align*}
Substituting these results into \eqref{ps1222}, we obtain
$$\zeta_2((1,2,2,2);PSp(2)) =\frac{827}{64}\zeta(5)\zeta(2)-\frac{1397}{64}\zeta(7).$$
\end{example}

\bigskip

\noindent
\begin{Large}{\bf Appendix} \end{Large}

\ 

\noindent
As stated at the end of Section \ref{sec-4}, here we give the explicit form of $\mathcal{P}((2,4,4,2,\bs{y};C_2)$ as follows:
\begin{align*}
\mathcal{P}&((2,4,4,2,\bs{y};C_2)\\
=&
-\frac{187169}{435891456000}
-\frac{\{y_1\}}{684288}
+\frac{1577 \{y_1\}^2}{479001600}
+\frac{\{y_1\}^3}{103680}
-\frac{157 \{y_1\}^4}{14515200}
-\frac{\{y_1\}^5}{51840}
+\frac{613 \{y_1\}^6}{43545600}
\\
&
+\frac{47 \{y_1\}^7}{2419200}
-\frac{127 \{y_1\}^8}{14515200}
-\frac{89 \{y_1\}^9}{4354560}
+\frac{901 \{y_1\}^{10}}{43545600}
-\frac{53 \{y_1\}^{11}}{7257600}
+\frac{443 \{y_1\}^{12}}{479001600}
\\
&
-\frac{19 \{2 y_1-y_2\}}{47900160}
+\frac{347 \{2 y_1-y_2\}^2}{958003200}
+\frac{\{2 y_1-y_2\}^3}{14515200}
-\frac{\{2 y_1-y_2\}^4}{29030400}
-\frac{\left\{y_1-\frac{y_2}{2}\right\}}{748440}
\\
&
+\frac{\left\{y_1-\frac{y_2}{2}\right\}^2}{748440}
-\frac{\left\{\frac{1}{2}+y_1-\frac{y_2}{2}\right\}}{748440}
+\frac{\left\{\frac{1}{2}+y_1-\frac{y_2}{2}\right\}^2}{748440}
+\frac{\left\{\frac{1}{2}+\frac{y_2}{2}\right\}^2}{1069200}
\\
&
+\frac{\left\{\frac{1}{2}+y_1-\frac{y_2}{2}\right\} \left\{\frac{1}{2}+\frac{y_2}{2}\right\}^2}{37800}
-\frac{\left\{\frac{1}{2}+y_1-\frac{y_2}{2}\right\}^2 \left\{\frac{1}{2}+\frac{y_2}{2}\right\}^2}{37800}
-\frac{\left\{\frac{1}{2}+\frac{y_2}{2}\right\}^4}{340200}
\\
&
-\frac{\left\{\frac{1}{2}+y_1-\frac{y_2}{2}\right\} \left\{\frac{1}{2}+\frac{y_2}{2}\right\}^4}{11340}
+\frac{\left\{\frac{1}{2}+y_1-\frac{y_2}{2}\right\}^2 \left\{\frac{1}{2}+\frac{y_2}{2}\right\}^4}{11340}
+\frac{\left\{\frac{1}{2}+\frac{y_2}{2}\right\}^6}{340200}
\\
&
+\frac{\left\{\frac{1}{2}+y_1-\frac{y_2}{2}\right\} \left\{\frac{1}{2}+\frac{y_2}{2}\right\}^6}{8100}
-\frac{\left\{\frac{1}{2}+y_1-\frac{y_2}{2}\right\}^2 \left\{\frac{1}{2}+\frac{y_2}{2}\right\}^6}{8100}
+\frac{\left\{\frac{1}{2}+\frac{y_2}{2}\right\}^8}{226800}
\\
&
-\frac{\left\{\frac{1}{2}+y_1-\frac{y_2}{2}\right\} \left\{\frac{1}{2}+\frac{y_2}{2}\right\}^8}{7560}
+\frac{\left\{\frac{1}{2}+y_1-\frac{y_2}{2}\right\}^2 \left\{\frac{1}{2}+\frac{y_2}{2}\right\}^8}{7560}
-\frac{\left\{\frac{1}{2}+\frac{y_2}{2}\right\}^9}{68040}
\\
&
+\frac{\left\{\frac{1}{2}+y_1-\frac{y_2}{2}\right\} \left\{\frac{1}{2}+\frac{y_2}{2}\right\}^9}{11340}
-\frac{\left\{\frac{1}{2}+y_1-\frac{y_2}{2}\right\}^2 \left\{\frac{1}{2}+\frac{y_2}{2}\right\}^9}{11340}
+\frac{\left\{\frac{1}{2}+\frac{y_2}{2}\right\}^{10}}{68040}
\\
&
-\frac{\left\{\frac{1}{2}+y_1-\frac{y_2}{2}\right\} \left\{\frac{1}{2}+\frac{y_2}{2}\right\}^{10}}{56700}
+\frac{\left\{\frac{1}{2}+y_1-\frac{y_2}{2}\right\}^2 \left\{\frac{1}{2}+\frac{y_2}{2}\right\}^{10}}{56700}
-\frac{\left\{\frac{1}{2}+\frac{y_2}{2}\right\}^{11}}{155925}
\\
&
+\frac{\left\{\frac{1}{2}+\frac{y_2}{2}\right\}^{12}}{935550}
+\frac{\left\{\frac{y_2}{2}\right\}^2}{1069200}
+\frac{\left\{y_1-\frac{y_2}{2}\right\} \left\{\frac{y_2}{2}\right\}^2}{37800}
-\frac{\left\{y_1-\frac{y_2}{2}\right\}^2 \left\{\frac{y_2}{2}\right\}^2}{37800}
-\frac{\left\{\frac{y_2}{2}\right\}^4}{340200}
\\
&
-\frac{\left\{y_1-\frac{y_2}{2}\right\} \left\{\frac{y_2}{2}\right\}^4}{11340}
+\frac{\left\{y_1-\frac{y_2}{2}\right\}^2 \left\{\frac{y_2}{2}\right\}^4}{11340}
+\frac{\left\{\frac{y_2}{2}\right\}^6}{340200}
+\frac{\left\{y_1-\frac{y_2}{2}\right\} \left\{\frac{y_2}{2}\right\}^6}{8100}
\\
&
-\frac{\left\{y_1-\frac{y_2}{2}\right\}^2 \left\{\frac{y_2}{2}\right\}^6}{8100}
+\frac{\left\{\frac{y_2}{2}\right\}^8}{226800}
-\frac{\left\{y_1-\frac{y_2}{2}\right\} \left\{\frac{y_2}{2}\right\}^8}{7560}
+\frac{\left\{y_1-\frac{y_2}{2}\right\}^2 \left\{\frac{y_2}{2}\right\}^8}{7560}
-\frac{\left\{\frac{y_2}{2}\right\}^9}{68040}
\\
&
+\frac{\left\{y_1-\frac{y_2}{2}\right\} \left\{\frac{y_2}{2}\right\}^9}{11340}
-\frac{\left\{y_1-\frac{y_2}{2}\right\}^2 \left\{\frac{y_2}{2}\right\}^9}{11340}
+\frac{\left\{\frac{y_2}{2}\right\}^{10}}{68040}
-\frac{\left\{y_1-\frac{y_2}{2}\right\} \left\{\frac{y_2}{2}\right\}^{10}}{56700}
\\
&
+\frac{\left\{y_1-\frac{y_2}{2}\right\}^2 \left\{\frac{y_2}{2}\right\}^{10}}{56700}
-\frac{\left\{\frac{y_2}{2}\right\}^{11}}{155925}
+\frac{\left\{\frac{y_2}{2}\right\}^{12}}{935550}
-\frac{19 \{-2 y_1+y_2\}}{47900160}
+\frac{71 \{y_1\} \{-2 y_1+y_2\}}{29937600}
\\
&
+\frac{19 \{y_1\}^2 \{-2 y_1+y_2\}}{2419200}
-\frac{17 \{y_1\}^3 \{-2 y_1+y_2\}}{1088640}
-\frac{19 \{y_1\}^4 \{-2 y_1+y_2\}}{725760}
\\
&
+\frac{\{y_1\}^5 \{-2 y_1+y_2\}}{32400}
+\frac{19 \{y_1\}^6 \{-2 y_1+y_2\}}{518400}
-\frac{\{y_1\}^7 \{-2 y_1+y_2\}}{36288}
\\
&
-\frac{23 \{y_1\}^8 \{-2 y_1+y_2\}}{483840}
+\frac{131 \{y_1\}^9 \{-2 y_1+y_2\}}{2177280}
-\frac{89 \{y_1\}^{10} \{-2 y_1+y_2\}}{3628800}
\\
&
+\frac{\{y_1\}^{11} \{-2 y_1+y_2\}}{285120}
+\frac{347 \{-2 y_1+y_2\}^2}{958003200}
+\frac{\{y_1\} \{-2 y_1+y_2\}^2}{604800}
\\
&
-\frac{13 \{y_1\}^2 \{-2 y_1+y_2\}^2}{1814400}
-\frac{\{y_1\}^3 \{-2 y_1+y_2\}^2}{90720}
+\frac{23 \{y_1\}^4 \{-2 y_1+y_2\}^2}{967680}
\\
&
+\frac{\{y_1\}^5 \{-2 y_1+y_2\}^2}{43200}
-\frac{11 \{y_1\}^6 \{-2 y_1+y_2\}^2}{345600}
-\frac{\{y_1\}^7 \{-2 y_1+y_2\}^2}{26880}
\\
&
+\frac{\{y_1\}^8 \{-2 y_1+y_2\}^2}{15360}
-\frac{23 \{y_1\}^9 \{-2 y_1+y_2\}^2}{725760}
+\frac{19 \{y_1\}^{10} \{-2 y_1+y_2\}^2}{3628800}
\\
&
+\frac{\{-2 y_1+y_2\}^3}{14515200}
-\frac{\{y_1\} \{-2 y_1+y_2\}^3}{907200}
-\frac{\{y_1\}^2 \{-2 y_1+y_2\}^3}{725760}
+\frac{\{y_1\}^3 \{-2 y_1+y_2\}^3}{136080}
\\
&
+\frac{\{y_1\}^4 \{-2 y_1+y_2\}^3}{207360}
-\frac{\{y_1\}^5 \{-2 y_1+y_2\}^3}{64800}
-\frac{\{y_1\}^6 \{-2 y_1+y_2\}^3}{103680}
\\
&
+\frac{11 \{y_1\}^7 \{-2 y_1+y_2\}^3}{362880}
-\frac{\{y_1\}^8 \{-2 y_1+y_2\}^3}{53760}
+\frac{\{y_1\}^9 \{-2 y_1+y_2\}^3}{272160}
\\
&
-\frac{\{-2 y_1+y_2\}^4}{29030400}
+\frac{\{y_1\}^2 \{-2 y_1+y_2\}^4}{1451520}
-\frac{\{y_1\}^4 \{-2 y_1+y_2\}^4}{414720}
\\
&
+\frac{\{y_1\}^6 \{-2 y_1+y_2\}^4}{207360}
-\frac{\{y_1\}^7 \{-2 y_1+y_2\}^4}{241920}
+\frac{\{y_1\}^8 \{-2 y_1+y_2\}^4}{967680}
-\frac{\{-y_1+y_2\}}{684288}
\\
&
+\frac{71 \{2 y_1-y_2\} \{-y_1+y_2\}}{29937600}
+\frac{\{2 y_1-y_2\}^2 \{-y_1+y_2\}}{604800}
-\frac{\{2 y_1-y_2\}^3 \{-y_1+y_2\}}{907200}
\\
&
+\frac{1577 \{-y_1+y_2\}^2}{479001600}
+\frac{19 \{2 y_1-y_2\} \{-y_1+y_2\}^2}{2419200}
-\frac{13 \{2 y_1-y_2\}^2 \{-y_1+y_2\}^2}{1814400}
\\
&
-\frac{\{2 y_1-y_2\}^3 \{-y_1+y_2\}^2}{725760}
+\frac{\{2 y_1-y_2\}^4 \{-y_1+y_2\}^2}{1451520}
+\frac{\{-y_1+y_2\}^3}{103680}
\\
&
-\frac{17 \{2 y_1-y_2\} \{-y_1+y_2\}^3}{1088640}
-\frac{\{2 y_1-y_2\}^2 \{-y_1+y_2\}^3}{90720}
+\frac{\{2 y_1-y_2\}^3 \{-y_1+y_2\}^3}{136080}
\\
&
-\frac{157 \{-y_1+y_2\}^4}{14515200}
-\frac{19 \{2 y_1-y_2\} \{-y_1+y_2\}^4}{725760}
+\frac{23 \{2 y_1-y_2\}^2 \{-y_1+y_2\}^4}{967680}
\\
&
+\frac{\{2 y_1-y_2\}^3 \{-y_1+y_2\}^4}{207360}
-\frac{\{2 y_1-y_2\}^4 \{-y_1+y_2\}^4}{414720}
-\frac{\{-y_1+y_2\}^5}{51840}
\\
&
+\frac{\{2 y_1-y_2\} \{-y_1+y_2\}^5}{32400}
+\frac{\{2 y_1-y_2\}^2 \{-y_1+y_2\}^5}{43200}
-\frac{\{2 y_1-y_2\}^3 \{-y_1+y_2\}^5}{64800}
\\
&
+\frac{613 \{-y_1+y_2\}^6}{43545600}
+\frac{19 \{2 y_1-y_2\} \{-y_1+y_2\}^6}{518400}
-\frac{11 \{2 y_1-y_2\}^2 \{-y_1+y_2\}^6}{345600}
\\
&
-\frac{\{2 y_1-y_2\}^3 \{-y_1+y_2\}^6}{103680}
+\frac{\{2 y_1-y_2\}^4 \{-y_1+y_2\}^6}{207360}
+\frac{47 \{-y_1+y_2\}^7}{2419200}
\\
&
-\frac{\{2 y_1-y_2\} \{-y_1+y_2\}^7}{36288}
-\frac{\{2 y_1-y_2\}^2 \{-y_1+y_2\}^7}{26880}
+\frac{11 \{2 y_1-y_2\}^3 \{-y_1+y_2\}^7}{362880}
\\
&
-\frac{\{2 y_1-y_2\}^4 \{-y_1+y_2\}^7}{241920}
-\frac{127 \{-y_1+y_2\}^8}{14515200}
-\frac{23 \{2 y_1-y_2\} \{-y_1+y_2\}^8}{483840}
\\
&
+\frac{\{2 y_1-y_2\}^2 \{-y_1+y_2\}^8}{15360}
-\frac{\{2 y_1-y_2\}^3 \{-y_1+y_2\}^8}{53760}
+\frac{\{2 y_1-y_2\}^4 \{-y_1+y_2\}^8}{967680}
\\
&
-\frac{89 \{-y_1+y_2\}^9}{4354560}
+\frac{131 \{2 y_1-y_2\} \{-y_1+y_2\}^9}{2177280}
-\frac{23 \{2 y_1-y_2\}^2 \{-y_1+y_2\}^9}{725760}
\\
&
+\frac{\{2 y_1-y_2\}^3 \{-y_1+y_2\}^9}{272160}
+\frac{901 \{-y_1+y_2\}^{10}}{43545600}
-\frac{89 \{2 y_1-y_2\} \{-y_1+y_2\}^{10}}{3628800}
\\
&
+\frac{19 \{2 y_1-y_2\}^2 \{-y_1+y_2\}^{10}}{3628800}
-\frac{53 \{-y_1+y_2\}^{11}}{7257600}
+\frac{\{2 y_1-y_2\} \{-y_1+y_2\}^{11}}{285120}
\\
&
+\frac{443 \{-y_1+y_2\}^{12}}{479001600}.
\end{align*}

\

\ 

\ 

\begin{flushleft}
\begin{small}
{Y. Komori}: 
{Department of Mathematics, Rikkyo University, Nishi-Ikebukuro, Toshima-ku, Tokyo 171-8501, Japan}

e-mail: {komori@rikkyo.ac.jp}

\ 

{K. Matsumoto}: 
{Graduate School of Mathematics, Nagoya University, Chikusa-ku, Nagoya 464-8602 Japan}

e-mail: {kohjimat@math.nagoya-u.ac.jp}

\ 

{H. Tsumura}: 
{Department of Mathematics and Information Sciences, Tokyo Metropolitan University, 1-1, Minami-Ohsawa, Hachioji, Tokyo 192-0397 Japan}

e-mail: {tsumura@tmu.ac.jp}
\end{small}
\end{flushleft}

\end{document}